\documentclass[12pt]{amsart}

\usepackage{amsmath}
\usepackage{amssymb}
\usepackage{amsthm}
\usepackage{graphicx}
\usepackage{color}

\newcommand{\bZ}{\mathbb{Z}}

\DeclareMathOperator{\ric}{Ric}

\DeclareMathOperator{\vol}{vol}
\DeclareMathOperator{\tr}{tr}

\DeclareMathOperator{\diver}{div}

\DeclareMathOperator{\Conf}{Conf}  
\DeclareMathOperator{\Mob}{Mob}    
\DeclareMathOperator{\TO}{O^+}     

\newcommand{\suchthat}{\mathrel{}:\mathrel{}}  

\def\sideremark#1{\ifvmode\leavevmode\fi\vadjust{\vbox to0pt{\vss
 \hbox to 0pt{\hskip\hsize\hskip1em
 \vbox{\hsize3cm\tiny\raggedright\pretolerance10000
 \noindent #1\hfill}\hss}\vbox to8pt{\vfil}\vss}}}

\newtheorem{theorem}{Theorem}[section]
\newtheorem{Theorem}{Theorem}
\newtheorem{lemma}[theorem]{Lemma}

\newtheorem{proposition}[theorem]{Proposition}

\theoremstyle{definition}
\newtheorem*{remark}{Remark}

\allowdisplaybreaks[1]

\author[J. S. Case]{Jeffrey S. Case}
\address{109 McAllister Building, Department of Mathematics, Penn State University, University Park, PA 16802, USA}
\email{jscase@psu.edu}
\author[Y.-J. Lin]{Yueh-Ju Lin}
\address{Department of Mathematics, Statistics, and Physics, Wichita State University, Wichita, KS 67260, USA}
\email{yueh-ju.lin@wichita.edu}
\author[S. E. McKeown]{Stephen E. McKeown}
\address{Department of Mathematical Sciences, FO 35, University of Texas at Dallas, 800 W. Campbell Road, Richardson, TX 75080, USA}
\email{stephen.mckeown@utdallas.edu}
\author[C. B. Ndiaye]{Cheikh Birahim Ndiaye}
\address{Department of Mathematics, Howard University, Annex 3, Graduate School of Arts and Sciences \#217, Washington, DC 20059, USA}
\email{cheikh.ndiaye@howard.edu}
\author[P. Yang]{Paul Yang}
\address{Princeton University, Department of Mathematics, Fine Hall, Washington Road, Princeton, NJ 08544-1000, USA}
\email{yang@math.princeton.edu}
\subjclass[2020]{Primary 53C21; Seconary 35J40, 53C18, 35B65}
\keywords{Bilaplacian, manifolds with corners, corner regularity, Gauss-Bonnet,
$Q$-curvature, $T$-curvature, $U$-curvature}

\title[Cherrier--Escobar on the half-ball]{A fourth-order Cherrier--Escobar problem with prescribed corner behavior on the half-ball}

\begin{document}
\begin{abstract}
	We show that the half-ball in $\mathbb{R}^4$ can be conformally changed so that the only contribution to the Gauss--Bonnet formula is a constant term at the corner. 
	This may be seen as a fourth-order Cherrier--Escobar-type problem on the half-ball.
\end{abstract}
\maketitle
\section*{Introduction}

The celebrated Uniformization Theorem states that every compact Riemannian surface can be conformally changed to one of constant Gauss curvature. Since the Gauss--Bonnet theorem relates the Euler characteristic and the
total Gauss curvature, this constant is determined up to scaling by the surface, 
and one interpretation of uniformization is thus that every Riemannian surface can be conformally changed to one for which the ``geometric contribution'' to the
genus is uniform. It can similarly be shown that when $M$ is a compact surface with smooth boundary, there is a conformal change for which $K = 0$ and the mean curvature is constant; 
thus, all the curvature is ``pushed to the boundary'' (and constant). This can be viewed
as a generalization of the Riemann Mapping Theorem of complex analysis.

In four dimensions, the Gauss--Bonnet formula on a closed manifold can be written in terms of the fourth-order $Q$-curvature: 
\[4\pi^2\chi(X^4) = \int_X\left(\frac{1}{8}|W|^2 + \frac{1}{2}Q_g\right)dV_g,\]
where $W$ is the Weyl curvature tensor and
\begin{equation*}
	6Q_g := -\Delta_gR_g + R_g^2 - 3\lvert\ric_g\rvert^2.
\end{equation*}
Although the order of $Q$ is higher than that of the Pfaffian, it has the distinct advantage that, like the Gauss curvature on surfaces, it transforms linearly under conformal change: if $\tilde{g} = e^{2\omega}g$, then
\[\widetilde{Q} = e^{-4\omega}(Q + P_4\omega),\]
where $P_4$ is the \emph{Paneitz operator}
\begin{equation*}
 P_4u := \Delta_g^2u + \delta_g\left(2\ric - \frac{2}{3}R_g\right)du.
\end{equation*}
Here $\ric$ is the Ricci tensor viewed as an endomorphism and $\delta_g$ is the divergence operator. The Paneitz operator is itself a conformal invariant:
\begin{equation*}
 \widetilde{P}_4 = e^{-4\omega}P_4 .
\end{equation*}
In light of these facts, a natural question is the $Q$-Yamabe problem: given a compact four-manifold $(X,g)$, does there exist $\omega$ so that $e^{2\omega}g$ has constant $Q$-curvature?
The answer is generically yes~\cite{cy95,dm08,lll12}. 
As with the Uniformization Theorem,
this has the interpretation that topological content may be distributed equally around the manifold, at least as much as possible---the Weyl curvature is a pointwise conformal invariant, so $|W|^2$ cannot generally
be made constant.

Chang and Qing~\cite{cq97i} introduced a boundary version of the Gauss--Bonnet formula for which the boundary term has good conformal invariance properties.
Specifically, on the boundary $M = \partial X$
of a four-manifold $X$ with boundary,
they defined the pointwise conformally invariant quantity
\begin{equation*}
	\mathcal{L} := \mathring{L}^{\mu\nu}R_{\mu\nu}^g -
	2\mathring{L}^{\mu\nu}R_{\mu\nu}^h + \frac{2}{3}H|\mathring{L}|_h^2 -
	\tr \mathring{L}^3
\end{equation*}
and an additional extrinsic curvature quantity
\begin{align*}
	T & := -\frac{1}{12}\mu(R_g) - \mathring{L}^{\mu\nu}R^g_{\mu\nu}
	+ \mathring{L}^{\mu\nu}R_{\mu\nu}^h - \frac{1}{2}H|\mathring{L}|_h^2
	+ \frac{2}{3}\mathring{L}^3\\
	&\quad + \frac{1}{6}HR_h - \frac{1}{27}H^3 - \frac{1}{3}\Delta_hH,
\end{align*}
and observed that
\begin{equation*}
	4\pi^2\chi(X) = \int_X\left( \frac{1}{8}|W|^2 + \frac{1}{2}Q_g \right)
	dV_g + \oint_M (\mathcal{L} + T)dV_h.
\end{equation*}
Here, $h$ is the induced metric on $M$, $\mu$ is the inward unit normal, $L$ is the second fundamental form and $\mathring{L}$ its tracefree part, and $H = \tr_hL$ is the mean curvature.
Like $Q$, the $T$-curvature transforms linearly under conformal transformation: $\widetilde{T} = e^{-3\omega}(T + P_3\omega)$, where
\begin{align*}
	P_3u &= \frac{1}{2}\mu(\Delta_gu) + \Delta_h\mu(u) - \frac{1}{3}H
	\Delta_hu + \mathring{L}^{\mu\nu}\nabla_{\mu}^h\nabla_{\nu}^hu
	+ \frac{1}{3}H^{\mu}u_{\mu}\\
	&\quad + \left( \frac{1}{6}R_g - \frac{1}{2}R_h
	- \frac{1}{2}|\mathring{L}|_h^2 + \frac{1}{3}H^2\right)
	\mu(u)
\end{align*}
is conformally invariant: $\widetilde{P}_3 = e^{-3\omega}P_3$. In light of this formulation of the Gauss--Bonnet formula, it is natural to ask the Cherrier--Escobar-type 
question \cite{cher84,esc92b}: 
can one make a conformal change to achieve $\widetilde{Q} = 0$ and $\widetilde{T} = const$? The answer, again, is generically yes~\cite{n09}. This may be interpreted as sending as much as possible of the interior
Gauss--Bonnet integral to the boundary; in the case of a locally conformally flat manifold, all of it is sent to the boundary. Because the PDE involved is fourth-order, two boundary conditions are actually needed;
Ndiaye~\cite{n09} imposes, in addition to $\widetilde{T} = const$, that $\widetilde{H} = 0$.

Our paper provides the first step toward solving the same problem for four-manifolds with \emph{corners} of codimension two. Suppose $X$ is a four-manifold with two boundary hypersurfaces $M$ and $N$ that
intersect along $\Sigma^2 = M \cap N$.
McKeown~\cite{m21} showed that there exist curvature quantities $G, U \in C^{\infty}(\Sigma)$ (defined in the next paragraph) and a second-order linear operator 
$P_2:C^{\infty}(X) \to C^{\infty}(\Sigma)$ such that
\begin{equation*}
	4\pi^2\chi(X^4) = \int_{X}\left( \frac{1}{8}|W|^2 + \frac{1}{2}Q_g \right)dV_g + \int_{M \cup N}(\mathcal{L} + T)dV_h + \oint_{\Sigma}(G + U)dV_k,
\end{equation*}
with $G$ a pointwise conformal invariant of weight $-2$ (meaning $\widetilde{G} = e^{-2\omega}G$), 
$U$ satisfying $\widetilde{U} = e^{-2\omega}(U + P_2\omega)$, and $\widetilde{P}_2 = e^{-2\omega}P_2$. We wish
to find $\omega$ such that $\widetilde{Q} = 0$, such that $\widetilde{T} = 0$ on both $M$ and $N$, and such that $\widetilde{U} = const$. Again this can be viewed as sending topological information ``to the corner.''

To define the quantities involved in the above formula, we first let $\theta_0 \in C^{\infty}(\Sigma)$ be the angle, at each point of $\Sigma$, between $M$ and $N$. We define
$k := g|_{T\Sigma}$. Viewing $\Sigma$ as a hypersurface in $M$, we let $II_M$ be its second fundamental form and $\eta_M = \tr_kII_M$ its mean curvature; similarly for $II_N$ and $\eta_N$. Let $\mu_M$ and $\mu_N$ be
the inward unit normals to $M$, $N$, while $\nu_M,\nu_N$ are the inward unit normals to $\Sigma$ in $M$, $N$, respectively. Finally, let $K$ be the Gaussian
curvature of $\Sigma$. Then
\begin{equation*}
	G := \frac{1}{2}\cot(\theta_0)(|\mathring{II}_M|_k^2 + |\mathring{II}_N|_k^2) - \csc(\theta_0)\langle \mathring{II}_M,\mathring{II}_N\rangle_k,
\end{equation*}
\begin{multline*}
	U := (\pi - \theta_0)K - \frac{1}{4}\cot(\theta_0)(\eta_M^2 + \eta_N^2) \\
	 + \frac{1}{2}\csc(\theta_0)\eta_M\eta_N - \frac{1}{3}(\nu_MH_M + \nu_NH_N),
\end{multline*}
and
\begin{equation}
	\label{p2eq}
	\begin{aligned}
	P_2u & := (\theta_0 - \pi)\Delta_ku + \nu_M\mu_Mu + \nu_N\mu_Nu\\
	&+ \cot(\theta_0)(\eta_M\nu_Mu + \eta_N\nu_Nu) - \csc(\theta_0)(\eta_N\nu_Mu + \eta_M\nu_Nu)\\
	&+ \frac{1}{3}(H_M\nu_Mu + H_N\nu_Nu).
	\end{aligned}
\end{equation}
The boundary value problem we wish to study is therefore
\begin{equation}
	\label{eqsys}
	\begin{cases}
	P_4\omega = -Q_g , & \text{in $X$}, \\
	P_3^M\omega = -T_M , & \text{on $M$}, \\
	P_3^N\omega = -T_N , & \text{on $N$}, \\
	P_2\omega = -U + Ce^{2\omega}, & \text{along $\Sigma$} ,
	\end{cases}
\end{equation}
with $C$ determined by Gauss--Bonnet and with some additional first-order conditions to make the problem well-posed.
There are several unusual features about the boundary value problem~\eqref{eqsys} which motivate us in this paper to consider this problem on the half-ball before we pursue our later goal of studying the general
problem:

First, \eqref{p2eq} is a complicated operator, and it is not completely clear whether we should expect the boundary value problem~\eqref{eqsys} to have a solution at all.

Second, \eqref{eqsys} is a rather strange
boundary value problem. The literature on elliptic boundary value problems on cornered spaces is vast, but the typical setup is for an operator of order $2m$ to be prescribed on the interior, and $m$ boundary operators to be
prescribed on the codimension-one boundary components. Here, on the other hand, we have a problem of order four with one boundary condition (so far) on the three-dimensional boundaries, and another prescribed on the
two-dimensional corner. It seems intuitively clear that if no further data are prescribed on the boundaries, the problem will be badly underdetermined. On the other hand, we cannot expect to be able to prescribe
an additional condition arbitrarily on $M$ and $N$: leaving aside questions of regularity at the corner (which can generally be controlled in appropriately weighted spaces), we would generally expect to find an essentially
unique solution given two boundary conditions on each boundary component; and this leaves no freedom to prescribe the $\widetilde{U} = C$ condition that is the goal of the problem.
The freedom to prescribe a second boundary
condition must therefore be somehow limited.
Our secondary goal is to better understand the conditions making (\ref{eqsys}) into a well-posed problem.

Working on the half-ball greatly simplifies the problem and distills its essential features. 
First, we are able to exploit the maximal symmetry of the space. Next, both boundary components and the corner are umbilic, significantly simplifying the equations.

Let $X = B^4_+$, the upper half-ball in $\mathbb{R}^4$. Let $M = S^3_+$, the round part of its boundary, and $N = B^3$, the three-ball, which is the flat part of the boundary. Thus, $\partial X = M \cup N$, and
$\Sigma = M \cap N$ is $S^2$. Now, the \emph{whole} ball $B^4$ has Euler characteristic $1$ and is flat with umbilic boundary, so the only contribution in the Gauss--Bonnet formula is $\oint_{S^3}T dA$.
Since $\vol(S^3) = 2\pi^2$, we conclude that $T \equiv 2$ on $S^3$ (thus, on $M$). Since the half-ball is missing half of the sphere and $B^3$ is both flat and totally geodesic, we conclude that the contribution
of $\oint_{S^2}U$ to the Gauss--Bonnet integrand for $B^4_+$ is $2\pi^2$, from which we conclude $U \equiv \frac{\pi}{2}$. We simplify our task by first looking for solutions $\omega$ that are constant
on $\Sigma = S^2$. In this case, $e^{-2\omega}$ is itself a constant there, and our corner condition reduces to $P_2\omega = \frac{\pi}{2}$ (that is, we want to double $e^{2\omega}U$). Thus, on the half-ball with our
ansatz that $\omega = const$ on $S^2$, the conditions we wish to satisfy are
\begin{equation}
	\label{sphereconds}
	\begin{cases}
		\Delta^2\omega = 0, & \text{in $B_+^4$}, \\
		\frac{1}{2}\mu_M(\Delta_{\mathbb{R}^4}\omega) + \Delta_{S^3}\mu_M(\omega) - \Delta_{S^3}\omega = -2, & \text{on $S_+^3$}, \\
		\frac{1}{2}\mu_N(\Delta_{\mathbb{R}^4}\omega) + \Delta_{\mathbb{R}^3}\mu_N(\omega) = 0, & \text{on $B^3$}, \\
		\nu_M(\mu_M(\omega)) + \nu_N(\mu_N(\omega)) - \nu_M(\omega) = \frac{\pi}{2}, & \text{along $S^2$} .
	\end{cases}
\end{equation}
(Here the Laplace term in $P_2$ vanishes because we assume $\omega$ is constant on the corner.)

What final boundary conditions should we prescribe on $M$ and $N$? One might hope---especially with so much symmetry---that we might prescribe $H = 0$ on both surfaces, analogous to the case of (uncornered) manifolds with boundary~\cite{n09}.
However, this is impossible: both $M$ and $N$ are umbilic, and any conformal change that makes them minimal will therefore make them totally geodesic. In that case, their intersection $\Sigma$ will
also be totally geodesic, and all the boundary and corner contributions to the Gauss--Bonnet formula (except the Gaussian term in $U$, which provides only half of the needed contribution) will therefore vanish.
By Gauss--Bonnet, therefore, no such function can be biharmonic.

In fact, it is not hard to derive a constraint on possible mean curvatures of the transformed metric. Recall that on either boundary, $H$ transforms by 
\begin{equation}
	\label{Htranseq}
	\widetilde{H} = e^{-\omega}(H - 3\mu(\omega)).
\end{equation}
Writing the last
equation of (\ref{sphereconds}) in spherical coordinates yields simply
\begin{equation*}
	2\partial^2_{\rho\phi}\omega = \frac{\pi}{2},
\end{equation*}
where we used $\mu_M = -\frac{\partial}{\partial \rho}$, $\nu_M = -\frac{\partial}{\partial \phi}$, $\mu_N = -\rho^{-1}\frac{\partial}{\partial \phi}$, and $\nu_N = -\frac{\partial}{\partial \rho}$. This, in turn, implies
that we have, separately,
\begin{equation}
	\label{sphereconsts}
	\begin{aligned}
		\nu_M(\mu_M(\omega)) &= \frac{\pi}{4}, \\
		\nu_N(\mu_N(\omega)) - \mu_N(\omega) &= \frac{\pi}{4}.
	\end{aligned}
\end{equation}
Now, taking equation (\ref{Htranseq}) for $M$, applying $\nu_M$, using the fact that $\nu_M = \mu_N$ at the corner, and then repeating for $N$ gives the following equations that must hold along $\Sigma$:
\begin{equation*}
	\begin{aligned}
		\nu_M(\widetilde{H}_M) &= -\frac{3\pi}{4}e^{-\omega} + \frac{1}{3}e^{\omega}\widetilde{H}_N\widetilde{H}_M,\\
		\nu_N(\widetilde{H}_N) &= -\frac{3\pi}{4}e^{-\omega} + \frac{1}{3}e^{\omega}\widetilde{H}_N\widetilde{H}_M.
	\end{aligned}
\end{equation*}
So any possible prescribed mean curvatures will have to satisfy these equations, and in particular, there is a strong constraint on what constants could appear as mean curvatures of $M$ and $N$.

Actually, since our goal is just to study the structure of the boundary conditions and how to make the problem well-posed, we leave aside the question of prescribing mean curvature, and content ourselves with the linear problem: we will prescribe $\mu_M(\omega)$ and $\mu_N(\omega)$. Then (\ref{sphereconsts}) dictates constraints on the prescribed data,
and the clear question is whether satisfying these constraints suffices to guarantee existence.
The answer is yes. Thus, the effect of the corner condition on our freedom is simply the existence of a scalar constraint at the corner on the second boundary
condition for each face.
Our main result is the following.

\begin{Theorem}
    \label{main-thm}
	Let $M = S^3_+$ and $N = B^3$, with $\Sigma = S^2 = M \cap N$. Let $\psi \in C^{\infty}(M)$ and $\varphi \in C^{\infty}(N)$ satisfy
	\begin{equation*}
		\begin{aligned}
			\nu_M(\psi) &= \frac{\pi}{4} , \\
			\nu_N(\varphi) - \varphi &= \frac{\pi}{4}.
		\end{aligned}
	\end{equation*}
	Then there exists $\omega \in C^{3}(B^4_+)$ such that $\omega|_{\Sigma}$ is constant, $\omega$ solves the boundary value problem~\eqref{sphereconds}, and
	\begin{align*}
		\mu_M(\omega) &= \psi, \\
		\mu_N(\omega) &= \varphi.
	\end{align*}
	With the additional condition
	\begin{equation*}
		\omega|_{\Sigma} = 0,
	\end{equation*}
	the solution is unique.
\end{Theorem}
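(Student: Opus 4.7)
The plan is to reduce the prescribed inhomogeneous Neumann data to zero by subtracting an explicit lift, reformulate the resulting system variationally on a suitable Hilbert space, and then bootstrap regularity using corner elliptic theory.

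First I would construct a smooth auxiliary function $\omega_0 \in C^\infty(\overline{B_+^4})$ satisfying $\mu_M \omega_0 = \psi$, $\mu_N \omega_0 = \varphi$, and $\omega_0|_{\Sigma} = 0$. The compatibility hypotheses $\nu_M\psi = \pi/4$ and $\nu_N\varphi - \varphi = \pi/4$ are precisely what is needed to realize the corner identities $\nu_M\mu_M\omega_0 = \pi/4$ and $\nu_N\mu_N\omega_0 - \mu_N\omega_0 = \pi/4$ consistently at $\Sigma$; one can take $\omega_0$ to be a carefully tuned low-order polynomial in $(\rho,\phi)$, extended smoothly to the interior. Writing $\omega = \omega_0 + \tilde\omega$ then reduces \eqref{sphereconds} together with the Neumann conditions to an analogous system for $\tilde\omega$ with homogeneous first-order traces and smooth right-hand sides in the interior, on $M\cup N$, and at $\Sigma$.

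Next, I would formulate the reduced BVP variationally. Using Green's identities and the explicit forms of $P_3$ and $P_2$, the system is the Euler--Lagrange system of
\[
E(\eta) = \tfrac{1}{2}\int_{B_+^4}(\Delta\eta)^2\,dV - \ell(\eta)
\]
on the Hilbert space $V := \{\eta \in H^2(B_+^4) : \mu_M\eta = 0,\ \mu_N\eta = 0,\ \eta|_{\Sigma} = 0\}$, where $\ell$ is a bounded linear functional encoding the inhomogeneities and the $P_3$ and $P_2$ operators arise as natural boundary conditions for critical points. The key analytic step is a Poincar\'e-type coercivity estimate $\|\eta\|_{H^2}^2 \le C\int_{B_+^4}(\Delta\eta)^2$ on $V$; the three vanishing traces defining $V$ are what eliminate the affine kernel of the principal part (on the curved face $S_+^3$, $\mu_M\eta = 0$ already forces the linear part of an affine $\eta$ to vanish, and $\eta|_\Sigma = 0$ then kills the constant). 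Once coercivity is established, Lax--Milgram yields a unique weak solution $\tilde\omega \in V$, and $\omega := \omega_0 + \tilde\omega$ solves the original problem. For regularity, standard interior and near-face theory handles smoothness on the complement of $\Sigma$; at the corner, weighted-Sobolev/Mellin techniques for fourth-order elliptic operators on cornered domains of Kondratiev--Dauge type give $C^3$ regularity up to and including $\Sigma$. Uniqueness under the normalization $\omega|_{\Sigma} = 0$ is then immediate from the variational setup: the difference of two solutions lies in $V$ with $\ell = 0$, and coercivity forces it to vanish.

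The main obstacle will be the coercivity estimate on $V$. The non-standard mixture of first-order and third-order boundary conditions on each face together with the pair of corner conditions prevents one from invoking classical biharmonic-Neumann estimates, and the boundary integrals produced by integration by parts must reassemble precisely into $P_3$ and $P_2$. A promising strategy is to exploit the $\SO(3)$-symmetry in the directions tangent to $\Sigma$ to decompose the problem into spherical-harmonic modes, reducing each mode to a two-dimensional BVP in $(\rho,\phi)$; explicit ODE analysis for low modes, combined with uniform estimates for high modes and a Fredholm/compactness argument, should then yield the desired a priori estimate.
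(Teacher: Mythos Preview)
Your approach is genuinely different from the paper's. The paper does not use a variational/Lax--Milgram argument at all; instead it constructs the solution explicitly. First it builds a particular biharmonic function $\omega_1$ as a series in zonal spherical harmonics (the $F_{k,1}$ and $F_{k,2}$ of Section~\ref{setsec}) that already satisfies $P_3^M\omega_1 = -2$ and $P_3^N\omega_1 = 0$, with $C^3$ regularity established by direct analysis of the series (Theorem~\ref{convthm}). It then corrects the Neumann data in two steps: to fix $\mu_N$, it uses the conformal involution $\Lambda$ of Equation~\eqref{eqn:inversion}, which swaps $M$ and $N$, to transport the problem to the round boundary where Theorem~\ref{spherethm} (the $(\Delta^2,P_3,\mu)$ problem on the full ball) applies; $\mu_M$ is then adjusted by a second application of Theorem~\ref{spherethm}. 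Uniqueness is a single Green's-identity computation. The payoff of this route is that the $C^3$ regularity at the corner---and the failure of $C^4$---is read off explicitly from the series, with no appeal to general corner theory.

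Your plan has two real gaps. First, the energy $E(\eta) = \tfrac{1}{2}\int_{B_+^4}(\Delta\eta)^2 - \ell(\eta)$ does \emph{not} produce $P_3^M$ as its natural boundary condition: with $\mu_M\eta = 0$ imposed, integration by parts gives $\mu_M\Delta\eta$ as the natural operator on $M$, whereas $2P_3^M\eta = \mu_M\Delta\eta - 2\Delta_M\eta$ differs by a tangential Laplacian. (On $N$ there is no discrepancy, since $P_3^N\eta = \tfrac{1}{2}\mu_N\Delta\eta$ once $\mu_N\eta = 0$.) The fix is to add $\int_M|\nabla_M\eta|^2$ to the energy; the corner term this generates when integrating by parts on $M$ is killed by $\zeta|_\Sigma = 0$, and the corrected quadratic form is exactly what appears in the paper's uniqueness calculation. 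Second, and more seriously, your invocation of Kondratiev--Dauge theory for $C^3$ regularity is a substantial black box: you would need to compute the operator pencil for $\Delta^2$ with these particular mixed third-order/first-order conditions at a right-angle edge and verify that no singular exponent lies in the strip obstructing $C^3$. The Remark after the paper's proof shows the solution is generically \emph{not} $C^4$, so there is an exponent sitting just above the $C^3$ threshold; ruling out anything below it is a genuine computation you have not supplied. Your proposed $\SO(3)$ mode decomposition is in spirit close to what the paper does, but the paper carries it through explicitly rather than packaging it as an abstract a priori estimate.
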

As the proof will make clear, it would suffice to take $\psi, \varphi \in C^{2,\alpha}$. If they are smooth, the solution will be smooth up to the boundary except at the corner, but at the corner, it will generally
not be even $C^4$.
This is a manifestation of the well-studied failure of elliptic regularity near corners~\cite{gri11,np94}.

The proof is fairly elementary in essence. Most of the technicalities arise because we need a solution that is $C^3$ in order for the boundary operators to be well-defined, and yet the solution is not $C^4$ in general.
Thus, we need to attain close-to-optimal regularity. 

Theorem \ref{main-thm} sheds light on how the condition at the corner interacts with the freedom to prescribe a second boundary condition on the boundary hypersurfaces. 
Our ultimate goal is to study the existence 
problem on a general four-manifold with corners. We expect that the insights gained from the current paper regarding constraints will prove very helpful in attacking that question.

In Section~\ref{setsec}, we define our terms and our coordinates, and introduce some basic biharmonic functions, defined in terms of spherical harmonics,
from which we will build the solutions. Section~\ref{sphersec} contains some existence and convergence theorems on the sphere $S^3$ and is where the bulk of our analysis occurs.
We construct the solutions in Section~\ref{conssec}.

All our solutions have $\omega$ constant at the corner. In Section~\ref{confsec}, we consider the action of the conformal group of $B^4_+$ and use it to construct solutions of the boundary value problem~\eqref{eqsys} which are not constant along $\Sigma$.
In particular, the uniqueness of Theorem~\ref{main-thm} fails much more profoundly if $\omega\rvert_\Sigma$ is allowed to be nonconstant.

\section{Setup}\label{setsec}

In this section, we fix our notation and introduce the functions from which
we will construct our solution.

We denote
\begin{equation*}
 X := B^4_+ = \left\{ (x,y,z,w) \suchthat x^2 + y^2 + z^2 + w^2 \leq 1,
w \geq 0\right\},
\end{equation*}
and denote by
\begin{align*}
 M & := S^3_+ = \left\{(x,y,z,w) \suchthat x^2 + y^2 + z^2 + w^2 = 1, w \geq 0  \right\} , \\
 N & := B^3 = \left\{ (x,y,z,0) \suchthat x^2 + y^2 + z^2 \leq 1 \right\} ,
\end{align*}
the boundary pieces separated by the corner $\Sigma := M \cap N$.

We denote by $g = g_E$ the Euclidean metric on $B^4$, which of course
satisfies $Q = 0$.
The half-sphere $M$ satisfies $T_M = 2$ and $H_M = 3$;
it is umbilic, so $\mathring{L}_M = 0$.
The flat boundary $N$ satisfies $T_N = H_N = 0$ and $\mathring{L}_N = 0$. 

The corner is simply the two-sphere of radius one, and so has Gauss curvature $K = 1$.
Viewed as a submanifold of $M$, it is a totally geodesic
equatorial sphere, so $\eta_M = 0$ and $\mathring{II}_M = 0$.
Viewed as a submanifold of $B^3$, it is the two-sphere in three-space,
so $\eta_N = 2$ and $\mathring{II}_N = 0$.

We will work primarily in spherical coordinates $(\rho,\phi,\alpha,\theta)$ on $\mathbb{R}^4$, so that the metric is
\begin{equation*}
 g = d\rho^2 + \rho^2(d\phi^2 + \sin^2(\phi)(d\alpha^2 + \sin^2(\alpha)
d\theta^2)) .
\end{equation*}
In particular, $\phi$ is the polar angle and $\Sigma$
is given by $\rho=1, \phi = \frac{\pi}{2}$.
In these coordinates, the inward unit normals to $M$ and $N$ are given by
$\mu_M = -\frac{\partial}{\partial \rho}$ and
(away from the origin) $\mu_N = -\rho^{-1}\frac{\partial}{\partial \phi}$.
The inward unit normals to $\Sigma$ in $M$ and $N$ are given by
$\nu_M = -\frac{\partial}{\partial \phi} = \mu_N|_{\Sigma}$
and $\nu_N = -\frac{\partial}{\partial \rho} = \mu_M|_{\Sigma}$.

In light of the above computations of mean curvatures, the Chang--Qing operator $P_3$ on $M$ is
given by
\begin{equation*}
	P_3^Mf = \frac{1}{2}\mu_M\Delta_{\mathbb{R}^4}f + \Delta_M(\mu_Mf) - \Delta_M(f),
\end{equation*}
while that on $N$ is given by
\begin{equation*}
	P_3^Nf = \frac{1}{2}\mu_N\Delta_{\mathbb{R}^4}f + \Delta_N(\mu_Nf).
\end{equation*}
Similarly, the $P_2$ operator on $\Sigma$ is given by
\begin{equation*}
	P_2f = -\frac{\pi}{2}\Delta_{S^2}f + \nu_M\mu_Mf + \nu_N\mu_Nf - \nu_Mf.
\end{equation*}

We will introduce two special families of biharmonic functions on the ball.
Before we do so, we briefly review the spherical harmonics on $S^3$. There are many sources for the following information;
we follow Higuchi~\cite{h87}, though there are also good textbook references~\cite{mul98,sau06,sw71}.
For each $k \in \mathbb{N} \cup \left\{ 0 \right\}$, the spherical
Laplacian $\Delta_{S^3}$ has an eigenvalue $-k(k + 2)$ with a $(k+1)^2$-dimensional eigenspace. A natural basis is given
by the \emph{spherical harmonics}, which are parametrized by $k \in \mathbb{N} \cup \left\{ 0 \right\}$, $l \in \left\{ -k,1-k,\dotsc,k - 1,k \right\}$ and
$p \in \left\{ |l|,\dotsc,k \right\}$. For each such choice, we may define a smooth function $\hat{f}_{k,p,l} \in C^{\infty}(S^3)$;
collectively, these form an orthonormal basis for $L^2(S^3)$.

With respect to the natural involution $\tau:S^3 \to S^3$,
\begin{equation*}
 \tau(x,y,z,w) := (x,y,z,-w) ,
\end{equation*}
inverting the three-sphere about its equatorial sphere, the spherical harmonics with $k - p$ even (resp.\ $k - p$ odd) are even (resp.\ odd).
This follows from \cite[equation (2.8)]{h87} and \cite[equation (7.3.1.86)]{pbm86iv}.
We are interested in the \emph{half}-sphere $M = S^3_+$, and on $M$, the even and odd spherical harmonics each give a separate
orthogonal basis for $L^2(M)$.
To see this, simply observe that any $L^2$ function $f$ on $M$ can be extended to $S^3$ as an even (resp.\ odd) function.
The resulting function on $S^3$ can then be expanded in spherical harmonics, and the expansion will consist entirely of even (resp.\ odd)
functions.
Since either expansion restricts to the half-sphere as an expansion of $f$, we see that the even and odd harmonics each gives an orthogonal basis for the
half-sphere. It is clear that each set remains mutually orthogonal, although an even and an odd spherical harmonic will not be orthogonal to each other on the half-sphere.

In order to have ortho\emph{normal} bases for $L^2(M)$, we must multiply each spherical harmonic by $\sqrt{2}$. We thus define $f_{k,p,l} := \sqrt{2}\hat{f}_{k,p,l}$.
We will primarily be interested in the zonal harmonic~\cite{sw71};
i.e.\ the spherical harmonic $f_{k,0,0}$ which is independent of $\alpha$ and $\theta$.
Since it is unambiguous,
we will refer to these as $f_k$.
These normalized zonal harmonics are given by
\begin{equation*}
	f_k(\phi) = \frac{\sin((k + 1)\phi)}{\pi\sin(\phi)}.
\end{equation*}

We require two infinite families of biharmonic functions on the ball:
\begin{align*}
	F_{k,p,l,1}(\rho,\phi,\alpha,\theta) & := (k + 2 - k\rho^2)\rho^kf_{k,p,l}(\phi,\alpha,\theta) ,\\
	F_{k,p,l,2}(\rho,\phi,\alpha,\theta) & := (\rho^2 - 1)\rho^kf_{k,p,l}(\phi,\alpha,\theta) .
\end{align*}
It is straightforward to compute that these are indeed biharmonic.
(In keeping with the above convention, and to minimize notational clutter,
we will set $F_{k,1} := F_{k,0,0,1}$ and $F_{k,2} := F_{k,0,0,2}$.)

Although it is not required for our construction, it may be instructive to note how $F_{k,p,l,1}$ and $F_{k,p,l,2}$ were derived.
The bilaplacian $\Delta^2$ on the Euclidean ball is, in particular, the Paneitz operator corresponding to the metric $g_E$. The Paneitz operator $P_4$ is a linear fourth-order operator
with principal part $\Delta^2$ which, under the conformal transformation $\tilde{g} = e^{2\omega}g$, satisfies $\widetilde{P}_4 = e^{-4\omega}P_4$. Thus biharmonic functions are also in the
kernel of the Paneitz operator $P_4^+$ of the hyperbolic metric $g_+ = \frac{4}{(1 - \rho^2)^2}g_E$. But this operator factors as $P_4^+ = \Delta_+(\Delta_+ + 2)$, where $\Delta_+$ is the
Laplace-Beltrami operator on hyperbolic space. It is trivial that each factor restricts to an isomorphism on the kernel of the other, and it follows that
$\ker(P_4) = \ker(P_4^+) = \ker(\Delta_+) \oplus \ker(\Delta_+ + 2)$. These second-order operators can be easily solved using separation of variables. Each yields two infinite families of
solutions, half of which are unbounded at the origin. The other two are above.

In light of the fact that the even and odd spherical harmonics are independently a basis of $L^2(M)$, we in fact find it useful to regard the above two families of functions as
\emph{four} families of functions, depending on the parity of $k$. The heart of our construction is Table~\ref{maintab}, which shows how each of the boundary operators
we care about acts on each of our zonal biharmonic functions.
\begin{table}
	\label{maintab}
	\caption{Behavior of basic functions under boundary operators}
\begin{tabular}{|c||c|c|c|c|}
	\hline
	& $P_3^M$ & $\mu_M$ & $P_3^N$ & $\mu_N$\\\hline
	$F_{2j,1}$ & $8j(j+1)(2j+1)f_{2j}$ & $0$ & $0$ & $0$\\\hline
	$F_{2j + 1,1}$& $4(j+1)(2j+1)(2j+3)f_{2j+1}$ & $0$ & $A_{2j+1,1}$ & $B_{2j+1,1}$\\\hline
	$F_{2j,2}$ & $0$ & $-2f_{2j}$ & $0$ & $0$\\\hline
	$F_{2j+1,2}$ & $0$ & $-2f_{2j+1}$ & $A_{2j + 1,2}$ & $B_{2j+1,2}$.\\\hline
\end{tabular}
\end{table}
In this table, $A$ and $B$ are polynomials in $\rho$ on $N$.
To derive the table, recall that
$\Delta = \partial_{\rho}^2 + \frac{3}{\rho}\partial_{\rho} + \rho^{-2}\Delta_{S^3}$.
We compute that
\begin{align*}
	\Delta F_{k,1} &= -4k(k+2)\rho^kf_{k} ,\\
	\mu_M F_{k,1} &= 0 ,\\
	B_{k,1} = \mu_N F_{k,1} &= -(k + 2 - k\rho^2)\rho^{k - 1}f_{k}'\left( \frac{\pi}{2} \right) ,\\
	\Delta_{\mathbb{R}^3}\mu_N F_{k,1} &= -k(k + 2)((k - 1) - (k + 1)\rho^2)\rho^{k - 3}f_{k}'\left( \frac{\pi}{2} \right) , \\
	A_{k,1} = P_{3}^NF_{k,1} &= -k(k+2)(k - 1 - (k+3)\rho^2)\rho^{k - 3}f_k'\left( \frac{\pi}{2} \right),\\
	\Delta F_{k,2} &= 4(k+2)\rho^kf_{k} , \\
	\mu_M F_{k,2} &= -2f_{k} , \\
	B_{k,2} = \mu_N F_{k,2} &= -(\rho^2 - 1)\rho^{k - 1}f_{k}'\left( \frac{\pi}{2} \right) , \\
	\Delta_{\mathbb{R}^3}\mu_N F_{k,2} &= (k(k-1)-(k+1)(k+2)\rho^2)\rho^{k - 3}f_{k}'\left( \frac{\pi}{2} \right) , \\
	A_{k,2} = P_{3}^NF_{k,2} &= (k(k-1)-(k+2)(k+3)\rho^2)\rho^{k - 3}f_k'\left( \frac{\pi}{2} \right).
\end{align*}

\section{Analysis on the sphere}\label{sphersec}

In this section we carry out some analysis on $S^3$ needed for our construction of solutions to the boundary value problem~\eqref{sphereconds}.

First, we state a theorem of Schechter~\cite{sch63}.
\begin{theorem}
	\label{schecthm}
	Suppose $A$ is an elliptic operator of order $m \geq 2$ and that $\left\{ B_j \right\}_{j = 1}^{m/2}$ is a system of boundary operators which, together with $A$, satisfy the Lopatinskii--Shapiro conditions on a smoothly
	bounded domain $\Omega$. We let $m_j$ be the order of $B_j$. Let $p \geq 2$. Then for all real $s$, there is a constant $C > 0$ such that for all $u \in C^{\infty}(\overline{\Omega})$,
	\begin{equation*}
	 \lVert u \rVert_{s,p} \leq C(\lVert Au \rVert_{s - m,p} + \sum_{j = 1}^{m/2}\lVert B_ju \rVert_{s - m_j-1/p,p}^{\partial \Omega} + \lVert u \rVert_{s - m,p}).
	\end{equation*}
\end{theorem}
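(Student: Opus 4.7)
The plan is to prove this a priori estimate by the classical Agmon--Douglis--Nirenberg strategy: localize via a partition of unity, flatten the boundary, freeze the top-order coefficients of $A$ and of the $B_j$, and reduce to a constant-coefficient model problem on $\mathbb{R}^n$ or on the half-space $\mathbb{R}^n_+$. The model problem is then solved by Fourier transform, and $L^p$ bounds for its solution operators are obtained from Calder\'on--Zygmund / Mihlin multiplier theory on the Bessel-potential scale.

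First I would choose a finite open cover $\{U_i\}$ of $\overline{\Omega}$ consisting of interior patches $U_i \subset \Omega$ and boundary patches admitting smooth diffeomorphisms $\Phi_i$ flattening $U_i \cap \partial\Omega$ onto an open subset of $\partial\mathbb{R}^n_+$, together with a subordinate smooth partition of unity $\{\chi_i\}$ and reference points $x_i \in U_i$. Pushing $A$ and the $B_j$ forward by $\Phi_i$ and freezing their top-order coefficients at $\Phi_i(x_i)$ yields constant-coefficient model operators $A_0^{(i)}$ and $B_{0,j}^{(i)}$; ellipticity and the Lopatinskii--Shapiro condition are preserved under smooth reparametrization and under continuous freezing of coefficients, so the model hypotheses hold on every patch.

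Next I would establish the half-space model estimate. For $v \in C_c^\infty(\overline{\mathbb{R}^n_+})$, apply the tangential Fourier transform in $x'$ and, for each $\xi' \neq 0$, solve the ODE system $A_0(\xi', D_n)\hat v = \hat f$ with $B_{0,j}(\xi', D_n)\hat v\rvert_{x_n=0} = \hat g_j$. Selecting the roots of the characteristic polynomial of $A_0(\xi', \cdot)$ lying in the upper half-plane produces a decaying fundamental system, and the Lopatinskii--Shapiro condition is exactly the invertibility of the resulting boundary matching matrix. This expresses $v$ as a tangential pseudodifferential operator applied to $A_0 v$ plus Poisson-type operators applied to the $B_{0,j}v$, and the bound
\begin{equation*}
 \lVert v \rVert_{s,p} \leq C\Bigl( \lVert A_0 v \rVert_{s-m,p} + \sum_{j=1}^{m/2} \lVert B_{0,j} v \rVert_{s - m_j - 1/p,p}^{\partial\mathbb{R}^n_+} \Bigr)
\end{equation*}
then follows from Mihlin-type $L^p$ multiplier theorems on the Bessel-potential scale $H^{s,p}$ (valid for $1 < p < \infty$, hence in particular for $p \geq 2$) together with the standard trace theorem, which supplies the $1/p$ loss at the boundary.

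Finally I would patch the local estimates. Applying the half-space bound to the pushforward of $\chi_i u$ and writing
\begin{equation*}
 A_0^{(i)}(\chi_i u) = \chi_i \widetilde A_i u + (A_0^{(i)} - \widetilde A_i)(\chi_i u) + [\widetilde A_i, \chi_i]\, u,
\end{equation*}
with $\widetilde A_i$ the pushforward of $A$ and an analogous decomposition for the $B_j$, the second term has principal coefficients of size $O(\mathrm{diam}\, U_i)$, which on a fine enough cover can be absorbed into the left-hand side, while the commutator has order at most $m-1$ so is controlled by $\lVert u \rVert_{s-1,p}$. Summing over $i$, absorbing the small top-order errors, and then using the standard interpolation inequality $\lVert u \rVert_{s-1,p} \leq \varepsilon \lVert u \rVert_{s,p} + C_\varepsilon \lVert u \rVert_{s-m,p}$ (available on the $H^{s,p}$ scale precisely because $m \geq 2$) yields the claimed estimate. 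The main obstacle is the model bound of the previous paragraph: the algebraic construction of the solution symbol from Lopatinskii--Shapiro is classical, but verifying the Mihlin-type multiplier hypotheses for the resulting matrix of Poisson operators on the \emph{full real} Bessel-potential scale with the sharp $1/p$ trace loss requires careful kernel estimates in the spirit of Agmon--Douglis--Nirenberg; once this is in hand, the localization, freezing, and absorption steps are routine perturbation arguments.
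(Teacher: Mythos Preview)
The paper does not prove this theorem at all: it is simply stated as a result of Schechter~\cite{sch63} and cited without proof. So there is no ``paper's own proof'' to compare your proposal against.

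That said, your outline is a faithful sketch of the classical Agmon--Douglis--Nirenberg / Schechter route to such a priori estimates. The localization, boundary-flattening, coefficient-freezing, and absorption-by-interpolation steps are standard and correctly described. The one place where real work hides is exactly where you flag it: the half-space model estimate on the full Bessel-potential scale $H^{s,p}$ for \emph{all} real $s$, with the sharp $1/p$ boundary loss. For $s \geq m$ this is the original ADN estimate; extending to arbitrary real $s$ (in particular negative $s$, which is how the paper actually applies the theorem in the proof of Theorem~\ref{spherethm}) is precisely Schechter's contribution and requires a careful duality / interpolation argument on the scale of Bessel-potential spaces rather than just the Mihlin multiplier theorem. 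Your sketch gestures at this but does not carry it out; if you were to write a self-contained proof, that is the step that would need the most attention.
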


For positive integers $s$, the norm $\lVert \cdot \rVert_{s,p}$ is the usual one on the Sobolev space $H^{s,p}$.
For negative integers $s$, it is defined by duality.
For real $s$, it is defined by complex interpolation~\cite{ada75,c64,l60}.
The boundary norms are the same, defined by partition of unity and charts.

\begin{proposition}
	\label{smoothprop}
	Suppose $\varphi \in C^{\infty}(S^3)$. Then there exists $u \in C^{\infty}(B^4)$ such that
	\begin{equation*}
	 \begin{cases}
		\Delta^2u = 0 , &\text{in $B^4$} ,\\
		P_3^{S^3}u = 0 , &\text{on $S^3$} ,\\
		\mu_{S^3}u = \varphi , &\text{on $S^3$} .
	 \end{cases}
	\end{equation*}
	Moreover, $u$ is constant on $S^3$ and, subject to the condition that this constant is zero, is unique. Finally, for all such $u$ vanishing
	on $S^3$, 
	\begin{equation}
		\label{smoothest}
		\lVert u \rVert_{L^{\infty}(B^4)} \leq \lVert\varphi\rVert_{L^{\infty}(S^3)} .
	\end{equation}
\end{proposition}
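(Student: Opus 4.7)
The plan is to give $u$ explicitly in closed form and verify the listed properties directly. Given $\varphi \in C^{\infty}(S^3)$, let $h \in C^{\infty}(\overline{B^4})$ be its classical harmonic extension, so that $\Delta h = 0$ in $B^4$ and $h|_{S^3} = \varphi$. I set
\[ u := \tfrac{1}{2}(1 - \rho^2)\,h. \]
If $\varphi = \sum c_{k,p,l} f_{k,p,l}$ is the spherical-harmonic expansion on $S^3$, then $h = \sum c_{k,p,l}\rho^k f_{k,p,l}$ and $u = -\tfrac{1}{2}\sum c_{k,p,l} F_{k,p,l,2}$, so this is precisely the natural candidate suggested by the $F_{k,p,l,2}$ column of Table~\ref{maintab}. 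Smoothness of $u$ on $\overline{B^4}$ and the vanishing $u|_{S^3} = 0$ (so in particular $u$ is constant on $S^3$) are immediate from this definition.

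For the interior equation I would expand $\Delta u = -4h - 2\rho\partial_\rho h$ and note that $\rho\partial_\rho h$ is again harmonic, since it acts as multiplication by $k$ on each degree-$k$ component of $h$; hence $\Delta^2 u = 0$. The Neumann condition $\mu_{S^3} u = \varphi$ is a one-line computation using $\mu_{S^3} = -\partial_\rho$. For $P_3^{S^3} u = 0$ I would substitute the spherical-harmonic series for $h$ into the definition of $P_3^{S^3}$ and check mode by mode that the three terms cancel; this is precisely the identity $P_3^{S^3} F_{k,p,l,2} = 0$, which extends from the zonal case recorded in Table~\ref{maintab} to general $(p,l)$ by Schur's lemma applied to the $SO(4)$-action on each irreducible space of degree-$k$ harmonics.

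The estimate~\eqref{smoothest} then follows by combining the maximum principle $\|h\|_{L^{\infty}(B^4)} \leq \|\varphi\|_{L^{\infty}(S^3)}$ with the bound $|(1-\rho^2)/2| \leq \tfrac{1}{2}$, yielding in fact $\|u\|_{L^{\infty}} \leq \tfrac{1}{2}\|\varphi\|_{L^{\infty}}$. For uniqueness I would expand any smooth biharmonic solution mode by mode as $u = \sum(a_{k,p,l} F_{k,p,l,1} + b_{k,p,l} F_{k,p,l,2})$ by Almansi's representation applied in each spherical-harmonic sector. The condition $\mu_{S^3} u = \varphi$ forces $b_{k,p,l} = -c_{k,p,l}/2$; the general-mode analogue $P_3^{S^3} F_{k,p,l,1} = 2k(k+1)(k+2)\,f_{k,p,l}$ of the first row of Table~\ref{maintab} forces $a_{k,p,l} = 0$ for $k \geq 1$; and $F_{0,0,0,1}$ is constant, contributing the free boundary value $2 a_{0,0,0}\,f_{0,0,0}$, which is pinned down by the normalisation $u|_{S^3} = 0$. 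The one genuine technical point is the Schur-equivariance step extending the table from zonal to general modes, which is immediate from $SO(4)$-invariance of all the operators involved and the irreducibility of the degree-$k$ harmonic spaces; every other step is routine.
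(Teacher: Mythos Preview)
Your proposal is correct, and your existence argument is genuinely cleaner than the paper's. You and the paper produce exactly the same function (the paper eventually notes $u = -\tfrac{1}{2}(\rho^2-1)v$ with $v$ harmonic, but only after the fact), yet the paper establishes smoothness on $\overline{B^4}$ by working with partial sums $u_N$, invoking Abel's test for uniform convergence, and then bootstrapping via Schechter's elliptic estimate (Theorem~\ref{schecthm}). Your observation that the series sums to $\tfrac{1}{2}(1-\rho^2)h$ with $h$ the classical harmonic extension of $\varphi$ makes all of this unnecessary: smoothness, biharmonicity, and the Neumann condition are then immediate. Your verification of $P_3^{S^3}u=0$ via $SO(4)$-equivariance and Schur is valid, though it is perhaps simpler just to note that the computations following Table~\ref{maintab} use only $\Delta_{S^3}f_{k,p,l} = -k(k+2)f_{k,p,l}$, so the table entries for $F_{k,2}$ hold verbatim for general $(p,l)$. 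The $L^\infty$ estimate is the same argument in both.

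For uniqueness the two proofs diverge. The paper uses an energy identity: integrating $u\Delta^2 u$ by parts with the homogeneous boundary conditions yields $\int (\Delta u)^2 + 2\int_{S^3}\lvert du\rvert^2 = 0$, forcing $u$ harmonic and constant on $S^3$. Your mode-by-mode argument via Almansi's representation is also correct, but note one point you glossed over: you need that a biharmonic $u$ smooth on the \emph{closed} ball has an Almansi decomposition $h_1 + \lvert x\rvert^2 h_2$ with $h_1,h_2$ likewise smooth on $\overline{B^4}$, so that their spherical-harmonic expansions converge well enough to read off the coefficients. This is true (e.g.\ solve $(8+4\rho\partial_\rho)h_2 = \Delta u$ radially and check decay of coefficients), but is worth a sentence. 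The paper's integration-by-parts argument sidesteps this entirely and is arguably the slicker of the two.
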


\begin{proof}
	Since $\varphi \in L^2(S^3)$, we may write $\varphi = \sum_{k = 0}^{\infty}\sum_{m = 1}^{(k + 1)^2}c_{k,m}f_{k,m}$, where $\left\{ f_{k,m} \right\}$ are the spherical harmonics of order $k$. (For convenience, 
	we here compress the indices $p$ and $l$ into a single index $m$.) This series converges in $L^2$ and, because $\varphi$ is smooth, also converges uniformly~\cite{k95}.
	We let $\varphi_N = \sum_{k = 0}^N\sum_{m = 1}^{(k + 1)^2}c_{k,m}f_{k,m}$ be the partial sums (over $k$) of this series.

	Since $\varphi$ is smooth, $\varphi \in H^{s,p}$ for any $s$ and for all $p > 1$.
	Due to an equivalent characterization of
	the spaces $H^{s,2}$ by Lions and Magenes~\cite[Remark 7.6]{lm12i}, we deduce that $\sum_{k,m}k^{2s}|c_{k,m}|^2 < \infty$.
	Therefore $\varphi_N$ converges to $\varphi$ in $H^{s,2}$. Consequently, the Sobolev embedding
	theorem~\cite[Prop.\ 4.3.3]{tay11} implies $\varphi_N$ converges to $\varphi$ in $C^{k,\alpha}(S^3)$ for any $k,\alpha$.

	We now define
	\begin{equation*}
		u_N := -\frac{1}{2}\sum_{k = 0}^N\sum_{m=1}^{(k+1)^2}c_{k,m}(\rho^2 - 1)\rho^kf_{k,m}
	\end{equation*}
	on $B^4$, and
	\begin{equation*}
	 u := \lim_{N \to \infty}u_N = -\frac{1}{2}\sum_{k = 0}^{\infty}\sum_{m = 1}^{(k + 1)^2}c_{k,m}(\rho^2 - 1)\rho^kf_{k,m} .
	\end{equation*}
	Because the series expansion of $\varphi$ converges uniformly and $\rho^k$ is bounded and semi-monotonic decreasing, it follows from Abel's test~\cite[Section~III.19]{brom08} that the series defining
	$u$ converges uniformly to a (therefore) continuous function on $B^4$. Meanwhile, by straightforward term-wise computation (see Table~\ref{maintab}), each $u_N$ satisfies
	\begin{align*}
		\Delta_{\mathbb{R}^4}^2u_N &= 0 ,\\
		P_3^{S^3}u_N &= 0 , \\
		\mu_{S^3}u_N &= \varphi_N.
	\end{align*}
	Each $u_N$ is also smooth, since $\rho^kf_{k,m}$ is a harmonic polynomial.
	Moreover, $u_N \rvert_{S^3} = 0$, and hence $u \rvert_{S^3} = 0$.

	Case~\cite{c18} observed that the boundary value problem with operator $\Delta^2$ and boundary operators $P_3$ and $\mu$ is elliptic --- that is, it satisfies the Lopatinskii--Shapiro conditions. We may thus
	apply Theorem~\ref{schecthm} to conclude that
	\begin{equation*}
		\lVert u_N - u_L \rVert_{4,2} \leq C(\lVert\varphi_N - \varphi_L\rVert_{5/2,2} + \lVert u_N - u_L\rVert_{0,2})
	\end{equation*}
	for all $L,N \in \mathbb{N}$.
	The right-hand side goes to zero for $L,N$ large, so we conclude that $\left\{ u_N \right\}$ is Cauchy, and hence convergent, in $H^{4,2}$. We may iterate this argument to conclude that $u$ is smooth
	and satisfies the desired boundary conditions.

	We now turn to uniqueness. Suppose $u$ satisfies the equations with $\varphi = 0$. Since $\mu_{S^3}(u) = 0$, the condition $P_3u = 0$ reduces to $\mu_{S^3}\Delta_{\mathbb{R}^4}u = 2\Delta_{S^3}u$. We conclude
	\begin{align*}
		0 &= \int_{B^4}u\Delta_{\mathbb{R}^4}^2udx\\
		&= \int_{B^4}(\Delta_{\mathbb{R}^4} u)^2dx + \int_{S^3}\left( \mu(u)\Delta_{\mathbb{R}^4}u - u\mu\Delta_{\mathbb{R}^4}u \right)d\sigma\\
		&= \int_{B^4} (\Delta_{\mathbb{R}^4}u)^2 dx - 2\int_{S^3}u\Delta_{S^3}u d\sigma\\
		&= \int_{B^4}(\Delta_{\mathbb{R}^4}u)^2 dx + 2\int_{S^3}\lvert du \rvert^2d\sigma.
	\end{align*}
	It follows that $u$ is constant on $S^3$ and is harmonic in $B^4$. Thus, any two solutions to the inhomogeneous problem differ by a constant.

	Only the last claim remains. Let 
	\[v_N = \sum_{k = 0}^N\sum_{m=1}^{(k+1)^2}c_{k,m}\rho^kf_{k,m} = -2(\rho^2 - 1)^{-1}u_N.\]
	Each $v_N$ is harmonic, and $v_N|_{S^3} = \varphi_N$. Thus, $\left\{ v_N \right\}$ is a bounded
	sequence of harmonic functions, and so has a convergent subsequence with harmonic limit~\cite[Theorem~2.11]{gt98}. 
	The limit function is clearly $v = \sum_{k = 1}^{\infty}\sum_{m=1}^{(k+1)^2}c_{k,m}\rho^kf_{k,m}$, which takes boundary values
	$\varphi$. By the maximum principle, $\lvert\lvert v\rvert\rvert_{\infty} \leq \lvert\lvert\varphi\rvert\rvert_{\infty}$. The claim follows since $u = -\frac{1}{2}(\rho^2 - 1)v$.
\end{proof}

This proposition enables us to solve the boundary value problem with less regular boundary data.

\begin{theorem}
	\label{spherethm}
	Suppose $\varphi \in C^{2,\alpha}(S^3)$, where $0 < \alpha \leq 1$. Then for any $\beta < \alpha$, there exists $u \in C^{\infty}(\mathring{B}^4) \cap C^{3,\beta}(B^4)$ such that
	\begin{equation*}
	 \begin{cases}
		\Delta^2u = 0 , &\text{in $B^4$} , \\
		P_3^{S^3}u = 0, &\text{on $S^3$} , \\
		\mu_{S^3}u = \varphi, &\text{on $S^3$} .
	 \end{cases}
	\end{equation*}
	It may be chosen so that $u|_{S^3} = 0$, and subject to this choice, it is unique.
\end{theorem}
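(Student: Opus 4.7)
The plan is to approximate $\varphi$ by smooth boundary data and pass to the limit using Schechter's Theorem~\ref{schecthm}. Fix auxiliary exponents with $\beta<\beta''<\tilde\alpha<\alpha$, and by mollification on $S^3$ choose smooth $\varphi_j\to\varphi$ in $C^{2,\tilde\alpha}(S^3)$. Proposition~\ref{smoothprop} yields smooth biharmonic $u_j$ with $P_3^{S^3}u_j=0$, $\mu_{S^3}u_j=\varphi_j$, and $u_j|_{S^3}=0$, together with the $L^\infty$ control $\|u_j-u_k\|_{L^\infty(B^4)}\le\|\varphi_j-\varphi_k\|_{L^\infty(S^3)}$ coming from~\eqref{smoothest}.

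Applied to the differences at the Sobolev order $s=3+\beta''+1/p$, Theorem~\ref{schecthm} reads
\[
\|u_j-u_k\|_{s,p}\;\le\;C\bigl(\|\varphi_j-\varphi_k\|_{2+\beta'',p}^{S^3}+\|u_j-u_k\|_{s-4,p}\bigr),
\]
since $\Delta^2(u_j-u_k)=0$ and $P_3(u_j-u_k)=0$. A direct Sobolev--Slobodeckij computation shows $C^{2,\tilde\alpha}(S^3)\hookrightarrow W^{2+\beta'',p}(S^3)$ for every $p<\infty$, because $\tilde\alpha>\beta''$ forces the near-diagonal integral defining the fractional seminorm to converge; so the first term on the right tends to zero. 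The second term is bounded by $\|u_j-u_k\|_{L^\infty}$, and therefore by $\|\varphi_j-\varphi_k\|_{L^\infty}$. Choosing $p>3/(\beta''-\beta)$, the Sobolev embedding $W^{s,p}(B^4)\hookrightarrow C^{3,\beta''-3/p}(B^4)\subset C^{3,\beta}(B^4)$ shows that $\{u_j\}$ is Cauchy in $C^{3,\beta}(B^4)$; let $u$ be its limit.

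Since convergence holds in $C^{3,\beta}$ up to the boundary, the first- and third-order operators $\mu$ and $P_3$ pass to the limit, giving $\mu u=\varphi$ and $P_3u=0$, while classical interior elliptic regularity for $\Delta^2u=0$ supplies $u\in C^\infty(\mathring B^4)$. Uniqueness follows exactly as in Proposition~\ref{smoothprop}: the difference $w$ of two $C^{3,\beta}$ solutions with $w|_{S^3}=0$ solves the fully homogeneous problem, and $C^{3,\beta}$ regularity is enough to justify the integration-by-parts identity that appears there, forcing $w$ to be harmonic with vanishing trace on $S^3$, hence identically zero.

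The main obstacle is calibrating the parameters: $p$ must be taken large to obtain the Sobolev embedding into $C^{3,\beta}$, but the boundary Sobolev norm on the right-hand side of Schechter's estimate is not directly controlled by Hölder-scale convergence of $\varphi_j$ at large $p$. The three-tier choice $\beta<\beta''<\tilde\alpha<\alpha$ manufactures the slack needed to reconcile these competing constraints; the degeneration of the key inclusions as $\tilde\alpha\to\alpha$ is precisely what forces the hypothesis $\beta<\alpha$.
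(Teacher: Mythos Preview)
Your proof is essentially the paper's: approximate $\varphi$ by smooth data, solve via Proposition~\ref{smoothprop}, and use Schechter's estimate to show the approximate solutions are Cauchy in a Sobolev norm high enough that embedding yields $C^{3,\beta}$ convergence. The only cosmetic difference is that the paper approximates directly in the Sobolev space $H^{2+\alpha',p}$ (invoking Stein for $C^{2,\alpha}\hookrightarrow H^{2+\alpha',p}$ and density of $C^\infty$ there), whereas you approximate in $C^{2,\tilde\alpha}$ by mollification and then embed into $W^{2+\beta'',p}$; both routes are legitimate.

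There is one small point you skate over. For the low-order term $\lVert u_j-u_k\rVert_{s-4,p}$ in Schechter's estimate to be dominated by $\lVert u_j-u_k\rVert_{L^\infty}$, you need $s-4\le 0$, i.e.\ $\beta''+1/p\le 1$. Your single stated constraint $p>3/(\beta''-\beta)$ forces this only when $\beta''\le(3+\beta)/4$; the paper imposes exactly this upper bound on its intermediate exponent $\alpha'$ for precisely this reason. You should either add the restriction $\beta''<(3+\beta)/4$ (always compatible with $\beta<\beta''$, since $\beta<1$) or explicitly also require $p>1/(1-\beta'')$. With that adjustment, your argument goes through.
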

\begin{proof}
	It follows from a result of Stein~\cite{ste61} that if $\alpha' < \alpha$ and $p \in [2,\infty)$, then $\varphi \in H^{2 + \alpha',p}$.  Choose such an $\alpha'$ in $\left(\beta,\min\left(\alpha,
	\frac{3 + \beta}{4}\right)\right)$, and let
	$p > \frac{3}{\alpha' - \beta}$.
	Because $C^{\infty}(S^3)$ is dense in
	$H^{2+\alpha',p}$, we may find $\varphi_N \in C^{\infty}(S^3)$ such that $\varphi_N \to \varphi$ in $H^{2 + \alpha',p}$.
	The Sobolev embedding theorem then implies that	$\varphi_N \to \varphi$ in $C^{0}(S^3)$.
	By Proposition~\ref{smoothprop}, we may uniquely find $u_N \in C^{\infty}(B^4)$ satisfying $\Delta^2u_N = 0$, $P_3(u_N) = 0$, and $\mu(u_N) = \varphi_N$ with $u_N|_{S^3} = 0$.

	Let $L,N \in \mathbb{N}$. Our choice of $\alpha'$ implies that $-1 + \frac{1}{p} + \alpha' < 0$, so
    	Theorem~\ref{schecthm} implies that
	\begin{equation*}
		\lVert u_N - u_L\rVert_{3 + \alpha' + \frac{1}{p},p} \leq C(\lVert\varphi_N - \varphi_L\rVert_{2 + \alpha',p} + \lVert u_N - u_L\rVert_{0,p}).
	\end{equation*}
	The first term on the right-hand side converges to 0 by hypothesis; the second converges to zero by the uniqueness and Estimate~\eqref{smoothest} of Proposition~\ref{smoothprop}.
	Consequently, $\left\{ u_N \right\}$ is Cauchy in $H^{3 + \alpha' + \frac{1}{p},p}$. Since $p > \frac{3}{\alpha' - \beta}$, we see that $3 + \alpha' + \frac{1}{p} > 3 + \beta + \frac{4}{p}$.
	The Sobolev embedding theorem~\cite[Theorem~7.63 and Section~7.65]{ada75} implies that the sequence is Cauchy in $C^{3,\beta}(B^4)$.
	Let $u$ be the limit function. It is immediate that it satisfies $P_3u = 0$ and $\mu_{S^3}(u) = \varphi$. It also satisfies $\Delta^2u = 0$ (in
	$H^{\alpha' + \frac{1}{p} - 1,p}$); so by local elliptic regularity, $u$ is biharmonic and smooth on the interior.
\end{proof}

We now turn to some regularity analysis of functions expanded entirely in zonal harmonics; i.e.\ functions on the sphere that depend only on the height. We start with the following small technical lemma.

\begin{lemma}
	\label{reglem}
	Suppose that $f \colon \left[ 0,\pi \right] \to \mathbb{R}$ is $C^k$.
	Viewed as a function of the polar angle, $f$ defines a $C^k$ function $\varphi$ on $S^3$ via $\varphi(\phi,\alpha,\theta) = f(\phi)$ if and only if
	its odd derivatives of order less than or equal to $k$ all vanish at the endpoints.
\end{lemma}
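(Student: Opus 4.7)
The plan is as follows. Away from the poles $\phi = 0, \pi$, the polar angle $\phi$ is a smooth coordinate on $S^3$ (indeed $\alpha, \theta$ are genuine coordinates and $\phi$ varies smoothly), so $\varphi$ is $C^k$ there iff $f$ is $C^k$ on $(0,\pi)$, and there is nothing to check. The content of the lemma is thus entirely a local statement at each pole, and by applying the isometry $\phi \mapsto \pi - \phi$ it suffices to treat the north pole.

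Near the north pole I would use the Euclidean coordinates inherited from the ambient $\mathbb{R}^4$: parameterize a neighborhood of $\phi = 0$ by $x = (x_1, x_2, x_3) \in \mathbb{R}^3$ with $|x| < 1$, letting the fourth coordinate be $\sqrt{1 - |x|^2}$. In these coordinates $\cos \phi = \sqrt{1 - |x|^2}$, hence $\sin \phi = |x|$ and $\phi = \arcsin|x|$ for $|x|$ small. The function $\varphi$ therefore becomes the purely radial function $\varphi(x) = h(|x|)$ where $h(s) := f(\arcsin s)$. The problem reduces to showing that $x \mapsto h(|x|)$ is $C^k$ near $0 \in \mathbb{R}^3$ if and only if the odd derivatives of $f$ at $0$ of order $\leq k$ all vanish.

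At this point I would invoke the classical characterization of radial $C^k$ functions: $x \mapsto h(|x|)$ is $C^k$ on a ball about the origin in $\mathbb{R}^n$ iff $h$ admits a $C^k$ even extension past $0$, equivalently iff $h^{(2j+1)}(0) = 0$ for every $2j+1 \leq k$. One direction is seen by restricting to a line through the origin, which forces $t \mapsto h(|t|)$ to be $C^k$ and hence to have matching one-sided derivatives at $0$; the other follows by expanding in Taylor series and noting that an even $C^k$ function is a $C^k$ function of $|x|^2$ up to a controlled remainder.

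Finally I would translate vanishing of odd derivatives of $h$ at $0$ into the same vanishing for $f$. Since $\psi(s) := \arcsin s$ is a smooth odd function with $\psi'(0) = 1$, its Taylor series at $0$ contains only odd powers, and $\psi(s)^j$ has the same parity as $j$ with leading coefficient $1$. The chain rule (or direct substitution into Taylor series) therefore yields a triangular relation
\[
h^{(n)}(0) = f^{(n)}(0) + \sum_{\substack{j < n \\ j \equiv n \pmod 2}} c_{n,j}\, f^{(j)}(0),
\]
so odd-order derivatives of $h$ at $0$ are a triangular linear combination of odd-order derivatives of $f$ at $0$, and vice versa. In particular $h^{(2j+1)}(0) = 0$ for all odd orders $\leq k$ iff $f^{(2j+1)}(0) = 0$ for all odd orders $\leq k$, completing the equivalence at $\phi = 0$; the north-pole argument transplanted to $\phi = \pi$ finishes the proof. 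The only delicate point is the parity bookkeeping in the chain rule, governed by oddness of $\arcsin$; the remaining ingredients are standard.
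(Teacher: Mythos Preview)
Your proof is correct and follows essentially the same strategy as the paper: localize to a pole, recognize $\varphi$ as a radial function in a Euclidean chart, and invoke the characterization of radial $C^k$ functions via vanishing of odd one-sided derivatives, with the converse handled by splitting off the even Taylor jet and controlling the remainder. The only notable difference is that you parameterize by $s = \lvert x\rvert = \sin\phi$ and so must pass through $h = f \circ \arcsin$ and a parity-preserving chain-rule step at the end, whereas the paper works directly with $\phi$ itself---observing that $\phi^2$ is smooth on the sphere since it is a smooth function of $\sin^2\phi = x^2+y^2+z^2$---which avoids that detour.
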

\begin{proof}
	Recall that we are using the coordinates $(x,y,z,w)$ on $\mathbb{R}^4$.

	Suppose $f$ defines a $C^k$ function $\varphi$ on the sphere. Since $\phi$ is a smooth coordinate away from the poles, the only thing to discuss is the poles; we focus on $N = (0,0,0,1)$.
	Let $\gamma(t) = (0,0,\sin(t),\cos(t))$; then $\gamma$ is a smooth curve, so $h := \varphi \circ \gamma$ is a $C^k$ function on $\mathbb{R}$. But since $w(\gamma(t)) = w(\gamma(-t))$ and
	$\varphi$ depends only on $w$, we conclude that $h$ is an even function of $t$. Now, $h(t) = f(\lvert t \rvert)$, so considering only non-negative values of $t$, we have
	$h^{(j)}(0) = f^{(j)}(0)$ for all $j$ odd. The claim follows.

	Conversely, suppose that all the odd derivatives of $f$ of order at most $k$ vanish. We assume $k$ is odd; the case of $k$ even is similar but slightly more straightforward. By Taylor's theorem, there is a
	polynomial $p$ of order $\frac{k - 1}{2}$ such that $f(\phi) = p(\phi^2) + \sin^k(\phi)\psi(\phi)$, where $\lim_{\phi \to 0}\psi(\phi) = 0$ and where $\sin^k(\phi)\psi(\phi)$ is $k$-times continuously differentiable
	with $k$-th derivative vanishing at $\phi=0$. But $\phi^2$ is a smooth function on the sphere, being smoothly related to $\sin^2(\phi) = x^2 + y^2 + z^2$; thus $p(\phi^2)$ is smooth on the sphere.
	
	Near the pole, we can take $(x,y,z)$ as a coordinate chart. Writing out
	\begin{align*}
		\frac{\partial}{\partial x} &= \frac{\sin(\alpha)\cos(\theta)}{\cos(\phi)}\frac{\partial}{\partial \phi} + \frac{\cos(\alpha)\cos(\theta)}{\sin(\phi)}\frac{\partial}{\partial \alpha}
		- \frac{\sin(\theta)}{\sin(\phi)\sin(\alpha)}\frac{\partial}{\partial \theta} , \\
		\frac{\partial}{\partial y} &= \frac{\sin(\alpha)\sin(\theta)}{\cos(\phi)}\frac{\partial}{\partial \phi} + \frac{\cos(\alpha)\sin(\theta)}{\sin(\phi)}\frac{\partial}{\partial \alpha}
		+ \frac{\cos(\theta)}{\sin(\phi)\sin(\alpha)}\frac{\partial}{\partial \theta}, \\
	 \frac{\partial}{\partial z} &= \frac{\cos(\alpha)}{\cos(\phi)}\frac{\partial}{\partial \phi} - \frac{\sin(\alpha)}{\sin(\phi)}\frac{\partial}{\partial \alpha} ,
	\end{align*}
	we can see that the application of any $k$ (or fewer) basis derivatives to $\sin^k(\phi)\psi(\phi)$ vanishes at $\phi = 0$. Thus, $\varphi$ is $C^k$.
\end{proof}

We can now prove the following convergence and regularity theorem for functions defined as series of zonal harmonics. The slightly awkward statement is for easy application later and relatively
easy proof. Recall that $N, S$ are the north and south poles of the sphere.

\begin{theorem}
	\label{convthm}
	Let $q \in \mathbb{N} \cup \left\{ 0 \right\}$. Suppose that $\left\{ c_j \right\}_{j = 1}^{\infty}$ is a sequence of strictly positive real numbers that converges to $c > 0$ 
	at least as fast as $c + Cj^{-\varepsilon}$ (some $C, \varepsilon > 0$).
	Let $p_j:\mathbb{R} \to \mathbb{R}$ ($j \in 2\mathbb{Z}$)
	be a sequence of polynomials of the form $r_j(x)x^j$, where $r_j$ is even, of degree bounded in $j$, satisfies $r_j(1) = 1$, 
	and is such that $p_j$ satisfies $\lVert p_j^{(m)} \rVert_{L^{\infty}([0,1])} \leq Cj^m$ for all $m \leq q$.
	Define
	\begin{equation*}
		u(\rho,\phi) := \sum_{j = 1}^{\infty}(-1)^jc_jj^{-(q + 2)}p_{2j}(\rho)f_{2j}(\phi).
	\end{equation*}
	Then $u \in C^{\infty}(\mathring{B}^4) \cap C^q(B^4 \setminus \left\{ N,S \right\})$. In particular, $u$ and its first $q$ term-wise derivatives converge uniformly and absolutely on the complement of any open
	set containing the poles.
	
	If $r_j = 1$ for all $j$, then $u \in C^q(B^4)$, with the series for the highest derivatives converging uniformly but not absolutely near the poles;
	and if the $c_j$ form a monotonic sequence, the partial sums of the $(q+1)$st tangential derivatives of $u$ on the sphere are uniformly bounded on the complement of any open set containing the central slice $B^3$.
\end{theorem}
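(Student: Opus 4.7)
The approach is to carefully bound a typical term $(-1)^j c_j j^{-(q+2)} p_{2j}(\rho) f_{2j}(\phi)$ together with its derivatives, splitting into three regimes: (i) points where $\sin\phi \geq \delta > 0$, (ii) points in the interior with $\rho \leq \rho_0 < 1$, and (iii) points near the poles, relevant only for the improved statement under $r_j \equiv 1$. Throughout, I would use the explicit representation $f_{2j}(\phi) = \sin((2j+1)\phi)/(\pi \sin\phi) = U_{2j}(\cos\phi)/\pi$, where $U_{2j}$ is the Chebyshev polynomial of the second kind.

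For regime (i), Leibniz applied to $\sin((2j+1)\phi)/(\pi\sin\phi)$ yields $|\partial_\phi^b f_{2j}(\phi)| \leq C_\delta j^b$ on the set where $\sin\phi \geq \delta$; together with the hypothesized $|p_{2j}^{(a)}(\rho)| \leq C j^a$ for $a \leq q$, any term of a partial derivative of order $a+b \leq q$ satisfies
\[
\bigl| c_j j^{-(q+2)} \partial_\rho^a \partial_\phi^b (p_{2j} f_{2j}) \bigr| \leq C j^{a+b-q-2} \leq C j^{-2},
\]
which is absolutely summable. Since $u$ is rotationally invariant in $(\alpha,\theta)$, this absolute $C^q$-convergence in the chart $(\rho,\phi,\alpha,\theta)$ transfers to any Cartesian chart around a non-polar point, establishing $u \in C^q(B^4 \setminus \{N,S\})$. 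For regime (ii), I would first bound $\|r_j\|_{L^\infty[0,1]}$ by a polynomial in $j$: since $r_j$ has bounded degree $D$, $r_j(1) = 1$, and $\|r_j \rho^{2j}\|_{L^\infty[0,1]} \leq C$, on $[1-1/j,1]$ the factor $\rho^{2j} \geq e^{-2}$, so $\|r_j\|_{L^\infty[1-1/j,1]} = O(1)$, after which a standard polynomial extrapolation via Chebyshev polynomials of degree $D$ gives $\|r_j\|_{L^\infty[0,1]} = O(j^D)$. Then $\|p_{2j}\|_{L^\infty[0,\rho_0]} = O(j^D \rho_0^{2j})$, and Markov's inequality applied to the polynomial $p_{2j}f_{2j}$ of degree $O(j)$ bounds its Cartesian derivatives of any fixed order by (polynomial in $j$)$\times \rho_0^{2j}$; the exponential decay suffices for $C^\infty$-convergence on $\mathring{B}^4$.

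For regime (iii) with $r_j \equiv 1$, each term $\rho^{2j} f_{2j}(\phi)$ is a harmonic polynomial, and the Bernstein-type estimate for harmonic polynomials of degree $2j$ gives $\|\partial^\alpha(\rho^{2j} f_{2j})\|_{L^\infty(B^1)} \leq C j^{|\alpha|+1}$. Hence terms of the $|\alpha|$-th-derivative series have size $O(c_j j^{|\alpha|-q-1})$: absolutely summable for $|\alpha| \leq q-1$, and of the critical size $O(1/j)$ for $|\alpha| = q$. For this borderline order, I would split $c_j = c + (c_j - c)$; the second piece gives an absolutely convergent series (since $|c_j - c| = O(j^{-\varepsilon})$), and for the main sum $c \sum (-1)^j j^{-(q+2)} \rho^{2j} f_{2j}(\phi)$, Euler's formula identifies it as $\tfrac{c}{\pi \sin\phi} \operatorname{Im}\bigl[e^{i\phi} \mathrm{Li}_{q+2}(-\rho^2 e^{2i\phi})\bigr]$. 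Iterated use of $\tfrac{d}{dz}\mathrm{Li}_s = \mathrm{Li}_{s-1}/z$ shows that the $(q+1)$st derivative of this function involves $\mathrm{Li}_1(-\rho^2 e^{2i\phi}) = -\log(1 + \rho^2 e^{2i\phi})$, singular only at $\rho = 1, \phi = \pi/2$ --- on the equator, away from the poles. This provides the uniform (but non-absolute) convergence of $q$-th derivatives at the poles, completing $u \in C^q(B^4)$.

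Finally, under the monotonicity hypothesis on $c_j$, I would Abel-sum the $(q+1)$st tangential derivative series $\sum (-1)^j c_j b_j(\phi)$ with $b_j(\phi) := j^{-(q+2)} f_{2j}^{(q+1)}(\phi)$: monotonicity of $c_j \to c$ yields bounded partial sums $|\sum_{k=1}^j (-1)^k c_k| \leq M$, reducing the bound to a control of $|b_j|$ together with a bounded-variation-type sum in $j$. The terms $b_j$ are $O(1)$ since $f_{2j}^{(q+1)}(\phi)$ can reach $O(j^{q+2})$ (compare $f_{2j}''(0) = -4j(j+1)(2j+1)/(3\pi)$), so the main technical obstacle is showing that the difference $b_{j+1}(\phi) - b_j(\phi)$ behaves well enough in $j$ for the Abel decomposition to remain uniformly bounded on $\{|\phi - \pi/2| \geq \eta\}$. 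I would handle this using the same polylog expression restricted to $\rho = 1$, where the $(q+1)$st $\phi$-derivative of $\mathrm{Li}_{q+2}(-e^{2i\phi})$ is controlled by $|\log(1 + e^{2i\phi})|$, bounded away from $\phi = \pi/2$, which yields the asymptotic control of $b_{j+1} - b_j$ needed to close the argument.
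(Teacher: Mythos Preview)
Your treatment of regimes (i) and (ii) is sound and matches the paper's in spirit: the paper also multiplies through by $\sin\phi$ away from the poles and bounds term-wise, and gets interior smoothness from the exponential decay $\rho^{2j}$ (invoking the Weierstrass $M$-test rather than Markov's inequality). Where your approach departs substantially is regime (iii), and here there are genuine gaps.

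The paper's device near the poles is not the polylogarithm but \emph{summation by parts} combined with Lagrange's trigonometric identity. One computes explicitly
\[
S_n(\phi) := \pi\sum_{k=0}^n (-1)^k f_{2k}(\phi) = \frac{(-1)^n \sin(2(n+1)\phi)}{2\sin\phi\cos\phi},
\]
from which $\partial_\phi^m S_n = O(n^{m+1})$ uniformly near $\phi = 0$. Abel-summing with $a_j = (-1)^j \pi\,\partial_\phi^m f_{2j}$ (so $A_j = \partial_\phi^m S_j$) and $b_j = c_j j^{-(q+2)}$, the rate hypothesis on $c_j$ gives $|b_j - b_{j+1}| = O(j^{-(q+2+\varepsilon)})$ and hence uniform convergence for each $m \leq q$. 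This is carried out in the $\phi$-variable; Lemma~\ref{reglem} then converts $C^q$-regularity in $\phi$, with vanishing odd derivatives at $\phi=0$, into $C^q$-regularity on $S^3$ through the pole, and Abel's theorem on power series pushes continuity of the mixed derivatives from the interior up to $\rho = 1$.

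Your polylogarithm argument correctly identifies the closed form of the limit when $c_j = c$ and shows it is real-analytic near the poles, but that is a statement about the \emph{limit function}, not about uniform convergence of partial sums in $C^q(B^4)$. You write that boundedness of the $(q+1)$st derivative of the limit ``provides the uniform (but non-absolute) convergence of $q$-th derivatives''; no such implication holds. What one can legitimately extract from the polylog is that the $q$-th $z$-derivative of $\mathrm{Li}_{q+2}$ has absolutely summable Taylor coefficients (being essentially $\mathrm{Li}_2$), hence $C^q$-convergence \emph{in $(\rho,\phi)$}. The passage from $C^q$ in $(\rho,\phi)$ to $C^q$ in Cartesian coordinates at the pole is precisely what requires the vanishing of odd $\phi$-derivatives at $\phi=0$ --- the content of Lemma~\ref{reglem} --- and you never invoke it. Without it the argument does not close: an even function that is $C^q$ in $r = (x^2+y^2+z^2)^{1/2}$ is in general only $C^{\lfloor q/2\rfloor}$ in $(x,y,z)$.

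For the final boundedness claim under monotonicity, your Abel summation reverses the roles of the two sequences relative to the paper's, which makes the ``bounded variation'' step harder --- as you yourself flag. The paper keeps $b_j = c_j j^{-(q+2)}$ and $A_j = \partial_\phi^{q+1} S_j = O(j^{q+2})$; expanding $b_j - b_{j+1}$, monotonicity enters only to guarantee $\sum|c_j - c_{j+1}| < \infty$, which makes the leading term of the Abel-summed series absolutely convergent and the remaining terms easy. Your route instead needs a uniform bound on $\sum_j \lvert b_{j+1}(\phi) - b_j(\phi)\rvert$ with $b_j = j^{-(q+2)} f_{2j}^{(q+1)}$, which you do not establish; the polylog at $\rho = 1$ again only controls the limit, not the variation of the partial terms.
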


\begin{proof}
	On the interior, $u$ and all its derivatives are smooth by the Weierstrass $M$-test. Note that $\rho^jf_j(\phi)$ is a harmonic polynomial, and since $r_{2j}$ is even, each term is smooth at the origin.

	Let $\phi_0 > 0$, and consider the set
	\begin{equation*}
	 B_{\phi_0} = \left\{ p \in B^4 \suchthat \phi_0 < \phi(p) < \pi - \phi_0, \frac{1}{2} < \rho \leq 1\right\} .
	\end{equation*}
	Since $\sin(\phi)$ is smooth and uniformly bounded away from zero on this set, we might as well consider
	\begin{equation*}
		\Phi := \pi\sin(\phi)u = \sum_{j = 1}^{\infty}(-1)^jc_jj^{-(q + 2)}p_{2j}(\rho)\sin((2j + 1)\phi).
	\end{equation*}
	Each $\rho$- or $\phi$-derivative increases the $L^{\infty}$-norm of $p_{2j}(\rho)\sin((2j+1)\phi)$ by a factor of order $j$. Since $\sum c_jj^{-2}$ converges absolutely, it is thus immediate that the series,
	with all its derivatives up through order $q$, converges uniformly and absolutely on $B_{\phi_0}$. We have thus shown that $u \in C^{q}(B^4\setminus\left\{ N,S \right\})$. 
	
	We now assume $r_j = 1$ and focus entirely on the north pole $N$; the south pole is equivalent. We will first consider $u$ and its tangential derivatives on the sphere itself, taking $\rho = 1$.

	Since $\phi$ is not a coordinate at $N$, we view $u$ as a function on $[0,1] \times \left[ 0,\frac{\pi}{2} \right]$ and then apply Lemma \ref{reglem}.

	Recall Lagrange's trigonometric identity:
	\begin{equation*}
		\sum_{k = 0}^{n}\cos\left( (2k+1)\theta \right) = \frac{\sin(2(n+1)\theta)}{2\sin(\theta)}.
	\end{equation*}
	Replacing $\theta$ by $\theta - \frac{\pi}{2}$ yields
	\begin{equation*}
		\sum_{k = 0}^n(-1)^k\sin\left( (2k+1)\theta \right) = \frac{(-1)^n\sin(2(n+1)\theta)}{2\cos(\theta)}.
	\end{equation*}
	Finally, we find that
	\begin{align*}
		S_n & := \pi\sum_{k = 0}^n(-1)^kf_{2k}(\phi) \\
		 & = \csc(\phi)\sum_{k = 0}^n(-1)^k\sin\left(( 2k + 1)\phi \right) \\
		 & = \frac{(-1)^n\sin(2(n+1)\phi)}{2\sin(\phi)\cos(\phi)}.
	\end{align*}
	By multiplying together the Laurent series of $\sin(2(n+1)\phi)$, $\sec(\phi)$, and $\csc(\phi)$, we easily see that, for fixed $m \geq 0$ and on a sufficiently small neighborhood $U$ of $\phi = 0$,
	the $m$-th $\phi$-derivative of $S_n$ is $O(n^{m + 1})$ uniformly in $n$ and $\phi$.

	Recall~\cite[Equation (1.2.1)]{zyg68} that, for any sequences $\left\{ a_j \right\}$ and $\left\{ b_j \right\}$, with $A_n = \sum_{j = 1}^na_j$,
	\begin{equation}
		\label{sumbyparts}
		\sum_{j = 1}^na_jb_j = \sum_{j = 1}^{n - 1}A_j(b_j - b_{j + 1}) + A_nb_n.
	\end{equation}
	Let $0 \leq m \leq q$ and let $a_j = (-1)^j\pi \partial_{\phi}^mf_{2j}$, so that $A_j = \partial_{\phi}^mS_j$. Thus, $A_j = O(j^{q + 1})$. Let $b_j = c_jj^{-(q + 2)}$. It follows from our condition on $c_j$ that
	$|b_j - b_{j + 1}| = O(j^{-(q + 2 + \varepsilon)})$. Consequently, the right-hand side of (\ref{sumbyparts}) converges uniformly and absolutely, and the left-hand side converges uniformly.

	Since each $f_{k}$ is an even function of $\phi$
	and we can differentiate term by term, the odd derivatives through order $q$ vanish at $\phi = 0$. We can now apply Lemma \ref{reglem}. This shows that up to $q$ \emph{tangential} derivatives
	of $u$ converge on the sphere.

	Since $u$ is smooth on the inside, tangential (that is, $\phi$) derivatives of $u$ also converge in the interior. Because $r_j = 1$, $u$ is a power series in $\rho$, and we
	can apply Abel's theorem~\cite[pp.\ 128--131]{brom08} to conclude that $\lim_{\rho \to 1^-}u(\rho,\omega) = u(1,\omega)$ for all $\omega \in S^3$. The same argument we have just gone through
	in the last paragraphs works to show continuity up to boundary of the $\rho$ derivative of up to $q - 1$ tangential derivatives.

	For $0 \leq k \leq q$, we can again apply the argument of the last several paragraphs to show that $\partial_{\rho}^ku$ is tangentially $C^{q - k}$, with all mixed derivatives continuous up to the sphere.

	It remains to show that the tangential $(q+1)$st derivatives are uniformly bounded if the $c_j$ are monotonic. We again work on the sphere, and return to the context of (\ref{sumbyparts}). In our setting, the right-hand side
	reads
	\begin{align*}
		&\sum_{j = 1}^{n - 1}\left[\left((c_j - c_{j + 1})(j+1)^{-(q + 2)} + \frac{(q + 2)c_j}{j(j+1)^{q + 2}} + \mathcal{C}\right)\partial_{\phi}^{q + 1}S_j\right]\\
		&\quad+ c_nn^{-(q + 2)}\partial_{\phi}^{q + 1}S_n,
	\end{align*}
	where $\mathcal{C}$ converges to 0 as fast as $j^{-(q + 4)}$.
	The last term is uniformly bounded, but not necessarily convergent as $n$ approaches infinity. As for the series, $\sum_{j = 0}^{\infty}|c_j - c_{j + 1}|$ converges by monotonicity 
	and $(j+1)^{-(q + 2)}\partial_{\phi}^{q + 1}S_j$
	is uniformly bounded, so the first term in the series converges absolutely. But for the second, $\sum_{j = 0}^{\infty}(c_j - c_{j + 1})j^{-1}$ also converges absolutely,
	and so it follows by \cite[Theorem I.2.4]{zyg68} that the series of second terms converges uniformly. The terms involving $\mathcal{C}$ are trivially
	convergent. The extension to the inside of the ball, $\rho < 1$, then follows from another straightforward application of (\ref{sumbyparts}).
\end{proof}

\section{Construction}\label{conssec}

We now prepare to prove our main result.

First, we define a map $\Lambda:B^4_+ \to B^4_+$ by 
\begin{equation}
 \label{eqn:inversion}
 \Lambda(x,y,z,w) := \frac{(2x,2y,2z,1-x^2-y^2-z^2-w^2)}{x^2 + y^2 + z^2 + (w + 1)^2}.
\end{equation}
Elementary calculations show that $\Lambda$ is a diffeomorphism that interchanges $M$ and $N$; i.e.\ it restricts to diffeomorphisms $\Lambda|_{M}:M \to N$ and $\Lambda|_N: N \to M$. It fixes $\Sigma$ pointwise.
Morever, $\Lambda^2=\mathrm{Id}$.
It is also easy to compute that $\Lambda$ is a conformal transformation:
\begin{equation*}
	\Lambda^*g_E = \Omega^2g_E,
\end{equation*}
where
\begin{equation*}
	\Omega = \frac{2}{1 + 2\rho\cos\phi + \rho^2} = \frac{2}{x^2 + y^2 + z^2 + (w + 1)^2}.
\end{equation*}

\begin{proof}[Proof of Theorem A]
	We first construct $u_0 \in C^3(B^4_+)$ satisfying
	\begin{align*}
		\Delta^2u_0 &= 0\\
		P_3^Mu_0 &= \frac{1}{\pi}.
	\end{align*}
	(We later multiply by $-2\pi$ to achieve the desired boundary condition on $M$.)
	The function $u_0$ will not satisfy $P_3^Nu_0 = 0$, so we will need to perturb it to achieve this at the next step.

	Note that $P_3^Mu_0 = \frac{1}{\pi}$ if and only if $P_3^Mu_0 = f_0$. Looking at the first line of Table \ref{maintab} makes us want to take $u_0 = F_{0,1}$, but unfortunately,
	$P_3^M(F_{0,1}) = 0$. The other even spherical harmonics are all orthogonal to $f_0$, so we must introduce an odd spherical harmonic
	to capture $f_0$. Rather than expanding $f_0$ fully in odd spherical harmonics, it will prove more convenient to expand $f_1$ in the even harmonics and so notice that
	\begin{equation}
		\label{f0exp}
		f_0 = \frac{1}{\langle f_0,f_1\rangle}\left[f_1 - \sum_{j = 1}^{\infty}\langle f_1,f_{2j}\rangle f_{2j}\right].
	\end{equation}
	(Here and after, inner products are with respect to $L^2(S^3_+)$.) This approach will prove to make adjusting $P_3^N$ much easier later.

	Now, recalling that $f_{k}(\phi) = \frac{\sin((k + 1)\phi)}{\pi\sin(\phi)}$ and using integration by parts, we compute that
	\begin{align}
		\langle f_{2k + 1},f_{2j}\rangle &= \frac{8(-1)^{j + k}(k + 1)}{\pi(2k + 2j + 3)(2k - 2j + 1)},
		\intertext{so in particular,}
		\langle f_{1},f_{2j}\rangle &= \frac{8(-1)^{j + 1}}{\pi(2j - 1)(2j + 3)}.
	\end{align}
	Thus, from (\ref{f0exp}) we have
	\begin{equation*}
		f_0 = \frac{3\pi}{8}\left[f_1 + \sum_{j = 1}^{\infty}\frac{8(-1)^{j}}{\pi(2j - 1)(2j + 3)}f_{2j}\right].
	\end{equation*}
	Now, $f_1 = \frac{1}{12}P_3^MF_{1,1}$ and $f_{2j} = \frac{1}{8j(j+1)(2j+1)}P_3^MF_{2j,1}$, so we have
	\begin{multline*}
		f_0 = \frac{3\pi}{8}\Biggl[ \frac{1}{12}P_3^MF_{1,1} \\
		 + \frac{1}{\pi}\sum_{j = 1}^{\infty}\frac{(-1)^j}{j(j+1)(2j-1)(2j+1)(2j+3)}P_3^MF_{2j,1} \Biggr].
	\end{multline*}
	Motivated by this, we formally define
	\begin{align*}
		u_0 & := \frac{3\pi}{8}\left[ \frac{1}{12}F_{1,1} + \frac{1}{\pi}\sum_{j = 1}^{\infty}\frac{(-1)^j}{j(j+1)(2j-1)(2j+1)(2j+3)}F_{2j,1} \right] , 
	\end{align*}
	leaving aside regularity and convergence questions for the moment.
	Note that
	\begin{multline*}
	 u_0 = \frac{3}{8}\Biggl[ \frac{(3-\rho^2)\rho\cos(\phi)}{6} \\
	  + 2\sum_{j = 1}^{\infty}\frac{(-1)^j}{j(j+1)(2j-1)(2j+1)(2j+3)}(j+1-j\rho^2)\rho^{2j}f_{2j} \Biggr] .
	\end{multline*}
	Continuing to work formally, we note (by direct computation or by using Table~\ref{maintab}) that $P_3^N(u_0) = -\frac{3}{4}$, with only the first term contributing.
	Since $P_3^NF_{1,2} = \frac{24}{\pi}$ while $P_3^MF_{1,2} = 0$, we set
	\begin{equation*}
	 u_1 := u_0 + \frac{\pi}{32}F_{1,2} .
	\end{equation*}
    Then
	\begin{multline*}
		u_1 = \frac{1}{8}\Biggl[ \rho\cos\phi \\
		 + 6\sum_{j = 1}^{\infty}\frac{(-1)^j}{j(j+1)(2j-1)(2j+1)(2j+3)}(j + 1-j\rho^2)\rho^{2j}f_{2j} \Biggr].
	\end{multline*}

	We discuss regularity of $u_1$. It follows immediately from Theorem~\ref{convthm} that $u_1$ converges to a $C^3$ function everywhere except possibly the north pole. Near the north pole,
	we can write
	\begin{multline}
		\label{deceq}
		u_1 = \frac{1}{8}\rho\cos\phi + \frac{3}{4}\sum_{j = 1}^{\infty}\frac{(-1)^j}{j(j+1)(2j-1)(2j+1)(2j+3)}\rho^{2j}f_{2j} \\
		\quad\quad+\frac{3}{4}(1 - \rho^2)\sum_{j = 1}^{\infty}\frac{(-1)^j}{(j+1)(2j-1)(2j+1)(2j+3)}
		\rho^{2j}f_{2j}.
	\end{multline}
	The first term is a multiple of $z$. The second term is globally $C^3$ by Theorem~\ref{convthm}. The series in the last term is globally $C^2$, but the sum defining third tangential derivatives
	is bounded near the north pole; thus, due to the factor of $1 - \rho^2$, the last term is globally $C^3$ as well. The convergence in all cases is uniform.

	It now follows that we can apply $\mu$ and $P_3$ term-by-term. We conclude that $P_3^M(u_1) = \frac{1}{\pi}$ and $P_3^N(u_1) = 0$, while
	$\mu_M(u_1) = -\frac{1}{8}\cos(\phi)$ and
	\begin{equation*}
		\mu_N(u_1) = \frac{\pi}{32}(\mu_N(F_{1,1}) + \mu_N(F_{1,2})) = \frac{1}{8}.
	\end{equation*}
	
	We set $\omega_1 := -2\pi u_1$. It is then clear that $P_3^M(\omega_1) = -2$, $P_3^N(\omega_1) = 0$, $\mu_M(\omega_1) = \frac{\pi}{4}\cos(\phi)$ and $\mu_N(\omega_1) = -\frac{\pi}{4}$.

	We turn now to prescribing the normal derivatives. This is easy to do on the three-sphere via Theorem~\ref{spherethm}. As it is much more tedious to do on $B^3$, we proceed in two steps,
	first inverting $B^4_+$ using $\Lambda$ so that we can prescribe data on the sphere instead.

	Let $\hat{g} = \Lambda^*g_E = \Omega^2g_E$ be the pullback of the Euclidean metric by $\Lambda$. We also define $\hat{\omega} := \log\Omega$, so that $\hat{g} = e^{2\hat{\omega}}g_E$. Let
	$\eta = \Lambda|_N:N \to M$. If we parametrize $S^3_+$ by $(\theta,\phi)$ where $\theta \in S^2$ and $\phi \in [0,\frac{\pi}{2}]$, then $\eta^{-1}(\theta,\phi) = (1 + \cos(\phi))^{-1}\sin(\phi)\theta \in B^3$.
	Let $\tilde{\varphi} = \varphi - \mu_N(\omega_1) = \varphi + \frac{\pi}{4}$. Observe that
	\begin{equation*}
		\nu_N(\tilde{\varphi}) - \tilde{\varphi} = 0.
	\end{equation*}
	The product rule implies that $\nu_N(e^{-\hat{\omega}}\tilde{\varphi}) = 0$;
	pulling back by the conformal diffeomorphism gives that $\nu_M\left( (\eta^{-1})^*(e^{-\hat{\omega}}\tilde{\varphi}) \right) = 0$.
	Thus, we can extend $\hat{\varphi} = (\eta^{-1})^*\left( e^{-\hat{\omega}}\tilde{\varphi} \right)$ by reflection to a $C^{2,1}$ function on all of $S^3$. 
	Therefore, by Theorem \ref{spherethm}, there exists a unique $\hat{v}_1 \in C^3(B^4)$ such that
	\begin{align*}
		\Delta^2\hat{v}_1 &= 0 , \\
		P_3^{S_3}\hat{v}_1 &= 0 , \\
		\mu_{S_3}\hat{v}_1 &= \hat{\varphi}.
	\end{align*}
	Since the data $\hat{\varphi}$ is (by construction) invariant under the reflection $w \mapsto -w$, by uniqueness it follows that $\hat{v}_1$ is as well. Consequently, when we restrict $\hat{v}_1$ to $B^4_+$, we also have
	$P_3^N(\hat{v}_1) = 0$ and $\mu_N(\hat{v}_1) = 0$, since both operators vanish on even functions. 
	Now let $v_1 = \Lambda^*\hat{v}_1$. By conformal diffeomorphism, and letting hats on boundary operators indicate they are defined with respect to $\hat{g}$, $v_1$ satisfies
	\begin{align*}
		\Delta_{\hat{g}}^2v_1 &= 0 , \\
		\widehat{P}_3^{M}(v_1) &= 0 , \\
		\widehat{P}_3^{N}(v_1) &= 0 , \\
		\hat{\mu}_M(v_1) &= 0 , \\
		\hat{\mu}_N(v_1) &= \eta^{*}(\eta^{-1})^*e^{-\hat{\omega}}\tilde{\varphi} = e^{-\hat{\omega}}\tilde{\varphi}.
	\end{align*}
	But then, by the conformal transformation laws for $\Delta^2, P_3$, and $\mu$, we have
	\begin{align*}
		\Delta^2v_1 &= 0 , \\
		P_3^M(v_1) &= 0 , \\
		P_3^N(v_1) &= 0 , \\
		\mu_M(v_1) &= 0 , \\
		\mu_N(v_1) &= \tilde{\varphi} = \varphi - \mu_N(\omega_1).
	\end{align*}
	Then setting $\omega_2 = \omega_1 + v_1$, we have $\mu_N(\omega_2) = \varphi$, as desired.

	More straightforwardly, we reflect $\psi - \mu_M(\omega_2)$ again to obtain a $C^{2,1}$ function on $S^3$; this follows since $\nu_M(\psi - \mu_M(\omega_2)) = 0$. Then, again by Theorem~\ref{spherethm},
	we can find $v_2$, biharmonic and satisfying $P_3^M(v_2) = P_3^N(v_2) = 0$, along with $\mu_N(v_2) = 0$ and $\mu_M(v_2) = \psi - \mu_M(\omega_2)$. We set
	\[\omega = \omega_2 + v_2.\]
	This function satisfies all our desired conditions.

	We finally turn to uniqueness. Given two solutions, let $u$ be their difference. Then $u$ satisfies $\Delta^2u = 0$ with the homogeneous boundary conditions, and is $C^3$.
	Because of this and the form of $P_3^N$, we may conclude that $\mu_N(\Delta_{\mathbb{R}^4}u) = 0$. Similarly, on $M$ we conclude that $\mu_M(\Delta_{\mathbb{R}^4}u) = 2\Delta_{M}u$.
	We recall~\cite[p.\ 81]{tay11} that a sufficient condition for the divergence theorem to hold for a given vector field $X$ on a manifold with corners is that $X$ and its divergence both be continuous. Let $X = u\nabla(\Delta u) - (\Delta u)\nabla u$. Then $X$ is continuous, and so is $\diver X = u\Delta^2u - (\Delta u)^2 = -(\Delta u)^2$. Thus, by the divergence theorem,
	\begin{align*}
		-\int_{B^4_+}(\Delta u)^2 dx &= \int_M\left( \Delta_{\mathbb{R}^4}(u)\mu_M(u) - u\mu_M(\Delta_{\mathbb{R}^4}u) \right)dA\\
		&\quad+ \int_N\left( \Delta_{\mathbb{R}^4}(u)\mu_N(u) - u\mu_N(\Delta_{\mathbb{R}^4}u) \right)dA\\
		&= -2\int_{M}u\Delta_{M}udA\\
		&= 2\int_M|\nabla_{S^3} u|^2 dA + 2\int_{\Sigma}u\nu_M(u)d\sigma\\
		&= 2\int_M|\nabla_{S^3} u|^2 dA,
	\end{align*}
	since $u|_{\Sigma} = 0$. We conclude that $\Delta u \equiv 0$. Since $u$ satisfies the homogeneous Neumann condition on both boundaries, we may then apply Green's identity one more time to conclude
	$|\nabla u|^2 \equiv 0$ on $B^4_+$. Thus $u$ is constant, and since it vanishes on $\Sigma$, it vanishes identically.
\end{proof}

\begin{remark}
	We note that the solution is \emph{not} in general $C^4$. In fact, if we take four $\rho$ derivatives of $u_1$ term-by-term (which, on the interior, we may surely do), 
	and observe that $f_{2j}\left( \frac{\pi}{2} \right) =
	\frac{(-1)^j}{\pi}$, we see that at the corner $\rho = 1$, $\phi=\frac{\pi}{2}$, the resulting series is comparable to the positive harmonic series, and so diverges. By using the decomposition~\eqref{deceq} and Abel's theorem~\cite{brom08}, we may conclude that $\partial_{\rho}^4u_1|_{\phi = \frac{\pi}{2}}$ really does approach infinity as $\rho \to 1^-$,
		and this is not a mere artifact of a series representation. 

		On the other hand, it is not hard to show that, away from the corner, $u_1$ is actually smooth up to the boundary. To see this, notice that any neighborhood in $M$ or $N$ \emph{away} from the corner
		can be extended to a compact \emph{smooth} three-manifold contained in $B^4_+$ and bounding a region, say $\Omega$; since $P_3^M(u_1), P_3^N(u_1), \mu_M(u_1)$, and $\mu_N(u_1)$ are all smooth,
		and $u_1$ is smooth in the interior of $B^4_+$, the elliptic boundary value problem $(\Delta^2,P_3^{\partial\Omega},\mu_{\partial \Omega})$ will have $u_1$ as a $C^3$ solution with smooth
		boundary values. Then, elliptic regularty theory in the form of Theorem~\ref{schecthm} will enable us to conclude that $u$ is in fact smooth up to the boundary. That this fails at the corner despite
		smooth boundary data is a manifestation of the general phenomenon whereby elliptic regularity is obstructed at corners.
	\end{remark}

\section{Action of the conformal group}\label{confsec}

Recall from Section~\ref{setsec} that on the closed Euclidean half-ball $(B_+^4,g)$, the boundary value problem~\eqref{eqsys} simplifies to
\begin{equation}
 \label{eqn:bvp}
 \begin{cases}
  P_4u = 0 , & \text{in $B_+^4$} , \\
  P_3^{S_+^3}u = 2 , & \text{on $S_+^3$} , \\
  P_3^{B^3}u = 0 , & \text{on $B^3$} , \\
  P_2^{S^2}u = \pi e^{2u} - \frac{\pi}{2} , & \text{on $S^2$} .
 \end{cases}
\end{equation}
We constructed solutions to the boundary value problem~\eqref{eqn:bvp} for which $u\rvert_{S^2} = 0$.
In this section we construct additional solutions using the conformal group of $B_+^4$.
To that end, it is convenient to denote by $\mathbb{R}^{1,n}$ the flat Minkowski space $(\mathbb{R}^{n+1},-dt^2+dx_1^2+\dotsm+dx_n^2)$, and by $\TO(1,n)$ the subgroup of the orthogonal group $O(1,n)$ consisting of those elements $\Phi \in O(1,n)$ such that $(t \circ \Phi)(1,0,\dotsc,0)>0$.

The \emph{conformal group} $\Conf(B_+^4)$ is
the group of all diffeomorphisms $\Phi$ of $B_+^4$ which preserve, as sets, each of $S_+^3 \cup B^3$ and $S^2$, and is such that $\Phi^\ast g = e^{2v}g$ for some $v \in C^\infty(B_+^4)$.
Note that $\Conf(B_+^4) \not\subseteq \Conf(B^4)$, due to the existence of elements of $\Conf(B_+^4)$---such as $\Lambda$ defined by Equation~\eqref{eqn:inversion}---which do not preserve $S^3$ as a set;
likewise, $\Conf(B^4) \not\subseteq \Conf(B_+^4)$ due to the existence of elements of $\Conf(B^4)$ which do not preserve $B^3$ as a set.

Our first observation is that $\Conf(B_+^4)$ is noncompact;
indeed, we show that $\Conf(B_+^4) = \Mob(S^2) \rtimes \bZ_2$.
To that end, recall that~\cite{Ratcliffe2006}
\begin{equation*}
 \Mob(S^{n-1}) \cong \Conf(B^n) \cong \TO(1,n) ,
\end{equation*}
where we identify
\begin{equation}
 \label{eqn:flat-ambient-space}
 \begin{split}
 S^{n-1} & \cong \left\{ p = (t,x) \in \mathbb{R} \times \mathbb{R}^{n} \suchthat t^2 = \lvert x \rvert^2 \right\} / ( p \sim \lambda p) , \\
 B^n & \cong \left\{ p = (t,x) \in \mathbb{R} \times \mathbb{R}^n \suchthat t = \sqrt{1 + \lvert x \rvert^2} \right\} .
 \end{split}
\end{equation}
The latter identification is explicitly given via the diffeomorphism
\begin{equation*}
 B^n \ni x \mapsto \left( \frac{1}{(1-\lvert x\rvert^2)^{1/2}} , \frac{x}{(1-\lvert x\rvert^2)^{1/2}} \right) \in \mathbb{R}^{1,n} .
\end{equation*}

We regard $\Mob(S^{n-1})$ as a subgroup of $\Conf(B_+^n)$ as follows:
Given $\Phi \in \Mob(S^{n-1})$, set
\begin{equation}
 \label{eqn:extend-Phi}
 i(\Phi) := \begin{pmatrix} \Phi & 0 \\ 0 & 1 \end{pmatrix} \in \TO(1,n+1) \cong \Conf(B^{n+1}) .
\end{equation}
Since $i(\Phi)$ fixes $x_{n+1}^{-1}(\{0\}) \cong B^n$, we may regard $i(\Phi)$ as an element of $\Conf(B_+^{n+1})$.
We readily check that $i \colon \Mob(S^{n-1}) \to \Conf(B_+^{n+1})$ is an injective group homomorphism.
Moreover, if $\Psi \in \TO(1,n+1)$ has a block diagonal decomposition $\Psi = \begin{pmatrix} \Phi & 0 \\ 0 & 1 \end{pmatrix}$, then necessarily $\Phi \in \Mob(S^{n-1})$.
This allows us to identify $\Mob(S^{n-1})$ as a subgroup of $\Conf(B_+^n)$.
We will abusively use the symbol $\Phi$ to denote both an element $\Phi \in \Mob(S^{n-1})$ and its image under $i$.

\begin{lemma}
 \label{conformal-group}
 We have that $\Conf(B_+^4) = \Mob(S^2) \rtimes \bZ_2$, where $\bZ_2 = \langle \Lambda \rangle$ is the subgroup generated by the conformal map of Equation~\eqref{eqn:inversion}.
\end{lemma}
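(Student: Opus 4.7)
The plan is to exhibit a short exact sequence $1 \to i(\Mob(S^2)) \to \Conf(B_+^4) \to \bZ_2 \to 1$ and split it using $\Lambda$. I define $\pi \colon \Conf(B_+^4) \to \bZ_2$ by recording whether a given conformal diffeomorphism preserves or swaps the two boundary hypersurfaces $S_+^3$ and $B^3$. This is well defined because any $\Phi \in \Conf(B_+^4)$ preserves $S_+^3 \cup B^3$ and $\Sigma = S^2$ setwise, and each of $S_+^3 \setminus \Sigma$, $B^3 \setminus \Sigma$ is connected, so continuity rules out mixed behavior. Since $\Lambda$ swaps the two pieces and $\Lambda^2 = \mathrm{Id}$, the cyclic subgroup $\langle \Lambda \rangle \cong \bZ_2$ provides a section of $\pi$ and intersects any kernel subgroup trivially.

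The inclusion $i(\Mob(S^2)) \subseteq \ker\pi$ is immediate from Equation~\eqref{eqn:extend-Phi}: in the ambient Minkowski model, $i(\Phi)$ fixes the $x_{4}$-axis pointwise, so it preserves $\{x_4 = 0\} \cong B^3$ and hence each boundary piece separately. The crux of the argument is the reverse containment: I must show that any $\Phi \in \ker\pi$ arises from some $\Psi \in \Mob(S^2)$ via $i$. To this end, let $\sigma(x,y,z,w) := (x,y,z,-w)$ and extend $\Phi$ to a diffeomorphism $\tilde\Phi$ of the full ball $B^4$ by setting $\tilde\Phi = \Phi$ on $B_+^4$ and $\tilde\Phi = \sigma \circ \Phi \circ \sigma$ on $B_-^4$. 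The two definitions agree on $B^3$ because $\Phi(B^3) = B^3$, and the glued map is conformal across $B^3$ because that hypersurface is totally geodesic in the Euclidean ball and $\sigma$ is an isometry. Therefore $\tilde\Phi \in \Conf(B^4) \cong \TO(1,4)$.

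Under the embedding of Equation~\eqref{eqn:flat-ambient-space}, the map $\tilde\Phi$ preserves both the hyperplane $\{x_4 = 0\}$ (because it preserves $B^3$) and the half-space $\{x_4 > 0\}$ (because $\Phi$ preserves $B_+^4$). An element of $\TO(1,4)$ with both properties must fix the $x_4$-axis with eigenvalue $+1$, and Minkowski orthogonality then forces the block-diagonal form $\tilde\Phi = \begin{pmatrix} \Psi & 0 \\ 0 & 1 \end{pmatrix}$ for some $\Psi \in \TO(1,3) \cong \Mob(S^2)$. Restricting back to $B_+^4$ yields $\Phi = i(\Psi)$, identifying $\ker \pi$ with $i(\Mob(S^2))$.

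Once the kernel is identified, the short exact sequence splits via $\Lambda$, producing the semidirect product $\Conf(B_+^4) = \Mob(S^2) \rtimes \bZ_2$ asserted in the lemma. I expect the main obstacle to be the reflection-extension step together with the ensuing classification of $\TO(1,4)$ matrices fixing a coordinate hyperplane and half-space; verifying that the extension is globally smooth and conformal across $B^3$ requires a brief appeal to total geodesicity, and the block-diagonal characterization is essentially linear algebra in the ambient pseudo-Euclidean space. Everything else is routine bookkeeping about kernels, sections, and the action of $\Lambda$ by conjugation.
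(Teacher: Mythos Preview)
Your proof is correct and follows essentially the same route as the paper: both identify the subgroup of $\Conf(B_+^4)$ fixing $B^3$ with $\Mob(S^2)$ via the block-diagonal form in $\TO(1,4)$, and then use $\Lambda$ to reach the remaining coset. Your short-exact-sequence packaging is cosmetically different from the paper's direct bijection-plus-normality check, but the content is the same.

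You are in fact more explicit than the paper on one point: the paper tacitly treats an element $\Psi \in \Conf(B_+^4)$ with $\Psi(B^3)=B^3$ as a matrix in $\TO(1,4)\cong\Conf(B^4)$ without saying why this is legitimate, whereas you justify it by the reflection extension $\tilde\Phi$. Your invocation of total geodesicity for smoothness across $B^3$ is slightly beside the point; the clean argument is that conformality forces $d\Phi$ to send the unit normal to a positive multiple of itself (giving $C^1$ matching of the two halves), after which Liouville's theorem in dimension $4$ upgrades the glued map to a global M\"obius transformation and hence to smoothness. Either way there is no gap.
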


\begin{proof}
 We must show that the map
 \begin{equation}
  \label{eqn:product}
  \Mob(S^2) \times \bZ_2 \ni (\Phi, n) \mapsto \Phi \Lambda^n \in \Conf(B_+^4)
 \end{equation}
 is bijective and that $\Mob(S^2)$ is a normal subgroup of $\Conf(B_+^4)$.
 
 Denote by
 \begin{equation*}
  \Conf^+(B_+^4) := \left\{ \Psi \in \Conf(B_+^4) \suchthat \Psi(B^3) = B^3 \right\}
 \end{equation*}
 the subgoup of conformal transformations which fix $B^3$ as a set, where we identify
 \begin{align*}
  B^3 & \cong \left\{ (t,x) \in \mathbb{R} \times \mathbb{R}^4 \suchthat t = \sqrt{1 + \lvert x \rvert^2} , x_4=0 \right\} .
 \end{align*}
 Let $\Psi \in \Conf(B_+^4)$.
 We readily compute that $(x_4 \circ \Psi)(p) = 0$ for all $p \in x_4^{-1}(\{0\})$ if and only if $\Psi = \begin{pmatrix} \Phi & 0 \\ 0 & 1 \end{pmatrix}$ for some $\Phi \in O^+(1,3)$.
 Therefore $\Conf^+(B_+^4) = \Mob(S^2)$.
 
 Suppose now that $\Psi \in \Conf(B_+^4) \setminus \Mob(S^2)$.
 Then $\Psi(B^3) = S_+^3$.
 Hence $\Psi\Lambda$ fixes $B^3$ as a set, and so $\Psi \Lambda \in \Mob(S^2)$.
 We conclude that the map~\eqref{eqn:product} is surjective.
 Its injectivity follows easily from the injectivity of $i$.
 
 Finally, let $\Phi \in \Mob(S^2)$ and $\Psi \in \Conf(B_+^4)$.
 By checking separately the cases $\Psi(B^3) = B^3$ and $\Psi(B^3)=S_+^3$, we see that $\Psi^{-1}\Phi\Psi$ fixes $B^3$ as a set.
 Therefore $\Mob(S^2) \unlhd \Conf(B_+^4)$.
\end{proof}

In particular, $\Conf(B_+^4)$ is noncompact.
Combining this with the conformal covariance~\cite{m21} of the operators $P_4$, $P_3^{S_+^3}$, $P_3^{B^3}$, and $P_2^{S^2}$ yields many solutions to the boundary value problem~\eqref{eqn:bvp} which are not $S^2$-invariant.

\begin{proposition}
 \label{more-solutions}
 Let $u$ be a solution of~\eqref{eqn:bvp}.
 For each $\Phi \in \Conf(B_+^4)$, the function
 \begin{equation*}
  \Phi \cdot u := u \circ \Phi + \log \lvert J_\Phi \rvert^{1/4}
 \end{equation*}
 is also a solution of~\eqref{eqn:bvp}, where $\lvert J_\Phi \rvert$ is the determinant of the Jacobian of $\Phi$.
\end{proposition}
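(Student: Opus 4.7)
The plan is to recast the boundary-value problem~\eqref{eqn:bvp} as a collection of geometric conditions on the conformal metric $h := e^{2u}g$, invoke the naturality of the $Q$-, $T$-, and $U$-curvatures under the stratification-preserving conformal diffeomorphism $\Phi$, and then translate back. No serious analysis is needed; the entire argument is an unpacking of conformal covariance.

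First, I would record the standard identity $\Phi^{\ast}g = e^{2\omega_\Phi}g$ with $\omega_\Phi = \tfrac{1}{4}\log\lvert J_\Phi \rvert$, obtained by taking determinants of both sides in dimension four. Consequently,
\begin{equation*}
 \Phi^{\ast}(e^{2u}g) = e^{2(u\circ\Phi)}\,\Phi^{\ast}g = e^{2(u\circ\Phi + \omega_\Phi)}g = e^{2(\Phi\cdot u)}g,
\end{equation*}
so $\Phi\cdot u$ is precisely the log conformal factor of $\Phi^{\ast}h$ with respect to $g$. Using the transformation laws $\widetilde Q = e^{-4u}(Q_g + P_4u)$, $\widetilde T = e^{-3u}(T_g + P_3u)$, $\widetilde U = e^{-2u}(U_g + P_2u)$, together with the background values $Q_g = 0$ in $B_+^4$, $T_g = 2$ on $S_+^3$, $T_g = 0$ on $B^3$, and $U_g = \tfrac{\pi}{2}$ on $S^2$, the BVP~\eqref{eqn:bvp} becomes the geometric statement that $Q_h \equiv 0$ in $B_+^4$, $T_h \equiv 0$ on each of $S_+^3$ and $B^3$, and $U_h \equiv \pi$ on $S^2$.

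Now, $Q$, $T$, and $U$ are natural functions of a Riemannian metric and its restriction to boundary strata, so $Q_{\Phi^{\ast}h} = Q_h \circ \Phi$, and likewise for $T$ on each boundary face and for $U$ on the corner; here I would use Lemma~\ref{conformal-group} to know that $\Phi$ preserves the stratification $B_+^4 \supset (S_+^3 \cup B^3) \supset S^2$. The crucial point is that $T_h$ takes the \emph{same} constant value ($0$) on both boundary faces and $U_h$ takes the constant $\pi$ on $S^2$. Therefore, even when $\Phi$ swaps the two faces (as occurs for $\Phi \in \Conf(B_+^4) \setminus \Mob(S^2)$), the pulled-back curvatures still agree stratum by stratum: $T_{\Phi^{\ast}h} \equiv 0$ on $S_+^3 \cup B^3$ and $U_{\Phi^{\ast}h} \equiv \pi$ on $S^2$. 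Reading the transformation laws in reverse for $\Phi^{\ast}h = e^{2(\Phi\cdot u)}g$ then recovers all four equations of~\eqref{eqn:bvp} for $\Phi\cdot u$. The only bookkeeping subtlety, namely the face-swap issue, is dissolved precisely by the uniformity of $T_h$ and $U_h$ on their respective strata.
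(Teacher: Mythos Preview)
Your proof is correct and rests on the same idea as the paper's---conformal naturality of $Q$, $T$, $U$ and their associated operators---but you organize it somewhat differently. The paper works at the \emph{operator} level: it first computes $P_k(\log\lvert J_\Phi\rvert^{1/4})$ by comparing the curvatures of $g$ and $\Phi^\ast g$ via the transformation laws, then separately invokes the covariance $\Phi^\ast(P_k v) = \lvert J_\Phi\rvert^{-k/4}P_k(v\circ\Phi)$, and finally adds the two contributions to obtain each $P_k(\Phi\cdot u)$. You instead work at the \emph{metric} level, recognizing $\Phi\cdot u$ as the log conformal factor of $\Phi^\ast h$ for $h=e^{2u}g$ and translating the BVP once and for all into the curvature constraints $Q_h=0$, $T_h=0$, $U_h=\pi$; diffeomorphism naturality of these curvatures then does all the work in one stroke. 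Your packaging is a bit more economical and has the advantage of making the face-swapping case (elements of $\Conf(B_+^4)\setminus\Mob(S^2)$) completely transparent, since it hinges only on $T_h$ taking the \emph{same} constant value on both boundary hypersurfaces; the paper's line-by-line operator computation leaves that case more implicit.
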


\begin{proof}
 Let $\Phi \in \Conf(B_+^4)$.
 Then there is a $\sigma \in C^\infty(B_+^4)$ such that $\Phi^\ast g = e^{2\sigma}g$.
 By definition of the Jacobian determinant, $e^{2\sigma} = \lvert J_\Phi \rvert^{1/2}$.
 On the one hand, the diffeomorphism invariance of the $Q$-, $T$-, and $U$-curvatures and the computations of Section~\ref{setsec} imply that
 \begin{align*}
  Q_4^{\Phi^\ast g} & = 0 , \\
  T_M^{\Phi^\ast g} & = -2 , \\
  T_N^{\Phi^\ast g} & = 0 , \\
  U^{\Phi^\ast g} & = \frac{\pi}{2} .
 \end{align*}
 The conformal transformation laws for the $Q$-curvature~\cite[p.\ 3679]{Branson1995}, the $T$-curvature~\cite[Lemma~3.3]{cq97i}, and the $U$-curvature~\cite[Theorem~1.1]{m21} then imply that
 \begin{align*}
  P_4\log\lvert J_\Phi\rvert^{1/4} & = 0 , \\
  P_3^M\log\lvert J_\Phi\rvert^{1/4} & = -2\lvert J_\Phi\rvert^{3/4} + 2 , \\
  P_3^N\log\lvert J_\Phi\rvert^{1/4} & = 0 , \\
  P_2\log\lvert J_\Phi\rvert^{1/4} & = \frac{\pi}{2}\lvert J_\Phi\rvert^{1/2} - \frac{\pi}{2} .
 \end{align*}
 Moreover, the diffeomorphism invariance and conformal covariance of the $P_4$-operator~\cite[Theorem~1]{Paneitz}, $P_3$-operator~\cite[Proposition~3.1]{cq97i}, and $P_2$-operator~\cite[Theorem~1.1]{m21} imply that
 \begin{align*}
  \Phi^\ast (P_4v) & = P_4^{\Phi^\ast g}(v \circ \Phi) = \lvert J_\Phi\rvert^{-1} P_4(v \circ \Phi) , \\
  \Phi^\ast (P_3^Mv) & = (P_3^M)^{\Phi^\ast g}(v \circ \Phi) = \lvert J_\Phi\rvert^{-3/4} P_3^M(v \circ \Phi) , \\
  \Phi^\ast (P_3^Nv) & = (P_3^N)^{\Phi^\ast g}(v \circ \Phi) = \lvert J_\Phi\rvert^{-3/4} P_3^N(v \circ \Phi) , \\
  \Phi^\ast (P_2v) & = P_2^{\Phi^\ast g}(v \circ \Phi) = \lvert J_\Phi\rvert^{-1/2} P_2(v \circ \Phi)
 \end{align*}
 for any sufficiently smooth function $v$ on $B_+^4$.
 Therefore
 \begin{align*}
  P_4(\Phi \cdot u) & = \lvert J_\Phi \rvert \Phi^\ast (P_4u) + P_4\log\lvert J_\Phi\rvert^{1/4} = 0 , \\
  P_3^M(\Phi \cdot u) & = \lvert J_\Phi \rvert^{3/4} \Phi^\ast (P_3^Mu) + P_3^M\log\lvert J_\Phi\rvert^{1/4} = 2 , \\
  P_3^N(\Phi \cdot u) & = \lvert J_\Phi \rvert^{3/4} \Phi^\ast (P_3^Nu) + P_3^N\log\lvert J_\Phi\rvert^{1/4} = 0 , \\
  P_2(\Phi \cdot u) & = \lvert J_\Phi \rvert^{1/2} \Phi^\ast (P_2u) + P_2\log\lvert J_\Phi\rvert^{1/4} = \pi e^{2\Phi \cdot u} - \frac{\pi}{2} . \qedhere \\
 \end{align*}
\end{proof}

\section*{Acknowledgements}
This work was initiated and significantly advanced during the workshop \emph{Partial differential equations and conformal geometry} held at the American Institute of Mathematics (AIM) in August 2022.
We thank AIM for providing an ideal research environment.
We are grateful for helpful conversations with Matthew Gursky,
Tzu-Mo Kuo, and Andrew Waldron.

JSC was partially supported by a grant from the Simons Foundation (Grant No.\ 524601). SEM was partially supported by a grant from the Simons Foundation (Grant No. 966614).
CBD was partially supported by a grant from the National Science Foundation (Grant
No. DMS-2000164). PY was partially supported by a grant from the National Science
Foundation (Grant No. DMS-1509505).

\bibliographystyle{alpha}
\bibliography{precorn}

\newcommand{\noop}[1]{}
\begin{thebibliography}{PBM86}

\bibitem[Ada75]{ada75}
R.~A. Adams.
\newblock {\em Sobolev Spaces}.
\newblock Academic Press, New York, 1975.

\bibitem[Bra95]{Branson1995}
Thomas~P. Branson.
\newblock Sharp inequalities, the functional determinant, and the complementary
  series.
\newblock {\em Trans. Amer. Math. Soc.}, 347(10):3671--3742, 1995.

\bibitem[Bro08]{brom08}
T.~J.~I. Bromwich.
\newblock {\em An Introduction to the Theory of Infinite Series}.
\newblock Macmillan and Co., London, 1908.

\bibitem[Cal64]{c64}
A.~P. Calderon.
\newblock Intermediate spaces and interpolation, the complex method.
\newblock {\em Studia Mathematica}, 24(2):113--190, 1964.

\bibitem[Cas18]{c18}
J.~S. Case.
\newblock Boundary operators associated with the {P}aneitz operator.
\newblock {\em Indiana University Mathematics Journal}, 67(1):293--327, 2018.

\bibitem[Che84]{cher84}
P.~Cherrier.
\newblock Probl\`{e}mes de {N}eumann non lin\'{e}aires sur les vari\'{e}t\'{e}s
  riemanniennes.
\newblock {\em Journal of Functional Analysis}, 57(2):154--206, 1984.

\bibitem[CQ97]{cq97i}
S.-Y.~A. Chang and J.~Qing.
\newblock The zeta functional determinants on manifolds with boundary {I}--the
  formula.
\newblock {\em Journal of Functional Analysis}, 147:327--362, 1997.

\bibitem[CY95]{cy95}
S.-Y.A. Chang and P.~C. Yang.
\newblock Extremal metrics of zeta functional determinants on 4-manifolds.
\newblock {\em Ann. of Math.}, 142:171--212, 1995.

\bibitem[DM08]{dm08}
Z.~Djadli and A.~Malchiodi.
\newblock Existence of conformal metrics with constant {$Q$}-curvature.
\newblock {\em Ann. of Math.}, 168:813--858, 2008.

\bibitem[Esc92]{esc92b}
J.~F. Escobar.
\newblock Conformal deformation of a {R}iemannian metric to a scalar flat
  metric with constant mean curvature on the boundary.
\newblock {\em Annals of Mathematics}, 136(1):1--50, 1992.

\bibitem[Gri11]{gri11}
P.~Grisvard.
\newblock {\em Elliptic Problems in Nonsmooth Domains}.
\newblock Society for Industrial and Applied Mathematics, Philadelphia, 2011.

\bibitem[GT98]{gt98}
D.~Gilbarg and N.~S. Trudinger.
\newblock {\em Elliptic Partial Differential Equations of Second Order}.
\newblock Classics in Mathematics. Springer, Berlin, 1998.

\bibitem[Hig87]{h87}
A.~Higuchi.
\newblock Symmetric tensor spherical harmonics on the {$N$}-sphere and their
  application to the de {S}itter group {$SO(N,1)$}.
\newblock {\em J. Math. Phys.}, 28:1553--1566, 1987.

\bibitem[Kal95]{k95}
H.~Kalf.
\newblock On the expansion of a function in terms of spherical harmonics in
  arbitrary dimensions.
\newblock {\em Bulletin of the Belgian Mathematical Society - Simon Stevin},
  2(4):361--380, 1995.

\bibitem[Lio60]{l60}
J.~L. Lions.
\newblock Une construction d'espaces d'interpolation.
\newblock {\em Comptes Rendus Acad. Sc. Paris}, 83:1853--55, 1960.

\bibitem[LLL12]{lll12}
J.~Li, Y.~Li, and P.~Liu.
\newblock The ${Q}$-curvature on a 4-dimensional {R}iemannian manifold
  $({M},g)$ with $\int {Q}d{V}_g = 8\pi^2$.
\newblock {\em Adv. Math.}, 231(3-4):2194--2223, 2012.

\bibitem[LM12]{lm12i}
J.~L. Lions and E.~Magenes.
\newblock {\em Non-homogeneous Boundary Value Problems and Applications: {V}ol.
  1}.
\newblock Springer, New York, 2012.

\bibitem[McK21]{m21}
S.~E. McKeown.
\newblock Extrinsic curvature and conformal {G}auss-{B}onnet for four-manifolds
  with corner.
\newblock {\em Pac. J. Math.}, 314(2):411--424, 2021.

\bibitem[M{\"u}l98]{mul98}
C.~M{\"u}ller.
\newblock {\em Analysis of Spherical Symmetries in {E}uclidean Spaces}, volume
  129 of {\em Applied Mathematical Sciences}.
\newblock Springer, New York, 1998.

\bibitem[Ndi09]{n09}
C.~B. Ndiaye.
\newblock Constant ${T}$-curvature conformal metrics on 4-manifolds with
  boundary.
\newblock {\em Pac. J. Math.}, 240(1):151--184, 2009.

\bibitem[NP94]{np94}
S.~Nazarov and B.~A. Plamenevsky.
\newblock {\em Elliptic Problems in Domains with Piecewise Smooth Boundaries},
  volume~13 of {\em Expositions in Mathematics}.
\newblock Walter de Gruyter, Berlin. New York, 1994.

\bibitem[Pan08]{Paneitz}
Stephen~M. Paneitz.
\newblock A quartic conformally covariant differential operator for arbitrary
  pseudo-{R}iemannian manifolds (summary).
\newblock {\em SIGMA Symmetry Integrability Geom. Methods Appl.}, 4:Paper 036,
  3, 2008.

\bibitem[PBM86]{pbm86iv}
A.~P. Prudnikov, U.~A. Bry\u{c}kov, and O.~I. Marichev.
\newblock {\em Integrals and series. Volume 3, More Special Functions}.
\newblock Gordon and Breach, Amsterdam, 1986.
\newblock (G. Gould, translator).

\bibitem[Rat06]{Ratcliffe2006}
John~G. Ratcliffe.
\newblock {\em Foundations of hyperbolic manifolds}, volume 149 of {\em
  Graduate Texts in Mathematics}.
\newblock Springer, New York, second edition, 2006.

\bibitem[Sau06]{sau06}
F.~Sauvigny.
\newblock {\em Partial Differential Equations 1: Foundations and Integral
  Representations}.
\newblock Springer, Heidelberg, 2nd ed. edition, 2006.

\bibitem[Sch63]{sch63}
M.~Schechter.
\newblock On ${L^p}$ estimates and regularity {II}.
\newblock {\em Math. Scan.}, 13:47--69, 1963.

\bibitem[Ste61]{ste61}
E.~M. Stein.
\newblock The characterization of functions arising as potentials.
\newblock {\em Bulletin of the American Mathematical Society}, 67(1):102--104,
  1961.

\bibitem[SW71]{sw71}
E.~M. Stein and G.~Weiss.
\newblock {\em Introduction to {F}ourier Analysis on {E}uclidean Spaces}.
\newblock Princeton University Press, Princeton, 1971.

\bibitem[Tay11]{tay11}
M.~E. Taylor.
\newblock {\em Partial Differential Equations {I}: Basic Theory}, volume 115 of
  {\em Applied Mathematical Sciences}.
\newblock Springer, New York, 2011.

\bibitem[Zyg68]{zyg68}
A.~Zygmund.
\newblock {\em Trigonometric Series}.
\newblock Cambridge University Press, Cambridge, 1968.

\end{thebibliography}
\end{document}